\numberwithin{equation}{section}
\newtheorem{tet}{Theorem}
\newtheorem{lem}{Lemma}
\newtheorem{kov}[tet]{Corollary}
\theoremstyle{remark}
\newtheorem{prob}{Problem}
\theoremstyle{definition}
\begin{document}

\title{Multiplicative complements II.}
\author{Anett Kocsis \thanks{E\" otv\" os Lor\' and University, Budapest, Hungary. Email: sakkboszi@gmail.com. Supported by the ÚNKP-21-1 New National Excellence Program of the Ministry for Innovation and Technology from the source of the National Research, Development and Innovation Fund. }, D\' avid Matolcsi \thanks{E\" otv\" os Lor\' and University, Budapest, Hungary. Email: matolcsidavid@gmail.com. Supported by the ÚNKP-21-1 New National Excellence Program of the Ministry for Innovation and Technology from the source of the National Research, Development and Innovation Fund.}, Csaba S\' andor \thanks{Department of Stochastics, Institute of Mathematics, Budapest University of
Technology and Economics, M\H{u}egyetem rkp. 3., H-1111, Budapest, Hungary. Department of Computer Science and Information Theory, Budapest University of Technology and Economics, M\H{u}egyetem rkp. 3., H-1111 Budapest, Hungary, MTA-BME Lend\"ulet Arithmetic Combinatorics Research Group,
  ELKH, M\H{u}egyetem rkp. 3., H-1111 Budapest, Hungary . Email: csandor@math.bme.hu.
This author was supported by the NKFIH Grants No. K129335.} and Gy\"orgy T\H ot\H os \thanks{ Faculty of Mathematics and Computer Science, Babe\c{s}-Bolyai University. }}

    \maketitle

\begin{abstract}
    In this paper we prove that if $A$ and $B$ are infinite subsets of positive integers such that every positive integer $n$ can be written as $n=ab$, $a\in A$, $b\in B$, then $\displaystyle \lim_{x\to \infty}\frac{A(x)B(x)}{x}=\infty $. We also prove many other results about sets like this.
\end{abstract}

{\it
2010 Mathematics Subject Classification:} 11B34, 11N25

{\it Keywords and phrases:}  Additive complements; counting function;

    \section{Introduction}

The set of nonnegative integers is denoted by $\mathbb{N}$. The counting function of a set $A\subseteq \mathbb{N}$ is defined as $A(x)=|A\cap \{ 0,1,\dots ,x\}|$ for every $x\in \mathbb{N}$. Let $A,B\subseteq \mathbb{N}$. The sets $A$ and $B$ are said to be additive complements if every nonnegative integers $n$ can be written as $n=a+b$, $a\in A$, $b\in B$. Clearly, if $A,B\subseteq \mathbb{N}$ are additive complements, then $A(x)B(x)\ge x+1$ for every $x\in \mathbb{N}$, therefore $\displaystyle \liminf_{x\to \infty} \frac{A(x)B(x)}{x}\ge 1$.  In 1964, answering a question of Hanani, Danzer \cite{D} proved that this bound is sharp.

\begin{tet}[Danzer, 1964]
There exist infinite additive complements $A,B\subseteq \mathbb{N}$ such that
$$
\lim_{x\to \infty}\frac{A(x)B(x)}{x}=1.
$$
\end{tet}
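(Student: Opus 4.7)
My plan is constructive: I would produce $A,B$ using a rapidly growing sequence $1 = N_0 \mid N_1 \mid N_2 \mid \cdots$ and the associated mixed-radix expansion, in which every $n\in\mathbb{N}$ writes uniquely as $n = \sum_{k\ge 0} c_k N_k$ with $0\le c_k < M_{k+1}$, where $M_{k+1} := N_{k+1}/N_k\to\infty$ is to be tuned at the end. The natural first candidate sets are the ``digit split''
$$
A_0 = \Big\{\sum_{k\ge 0} c_k N_k : c_k=0 \text{ for all odd } k\Big\}, \qquad B_0 = \Big\{\sum_{k\ge 0} c_k N_k : c_k=0 \text{ for all even } k\Big\},
$$
with finite support understood. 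Uniqueness of the expansion yields $A_0 + B_0 = \mathbb{N}$ with a unique decomposition of every $n$ into its even- and odd-indexed parts, so these are additive complements, and both are clearly infinite.

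The counting is straightforward on each block $[N_K, N_{K+1})$. When $K$ is even, $B_0(x) = M_2 M_4\cdots M_K$ is constant on the block, and writing $x = q N_K + r$ with $0\le r < N_K$ gives the leading-digit identity $A_0(x) = q\cdot M_1 M_3\cdots M_{K-1} + A_0(r)$, hence
$$
A_0(x)\,B_0(x) = (x-r) + A_0(r)\cdot M_2 M_4 \cdots M_K.
$$
In particular at the ``clean'' endpoints $x = N_{K+1}-1$ one has $r = N_K-1$, $A_0(r) = M_1 M_3\cdots M_{K-1}$, and the product collapses to $x+1$; so ratio $1$ is already attained at these points. The analogous analysis handles odd $K$ by symmetry.

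The main obstacle is uniformity of the ratio across the block. The set $A_0$ is badly non-uniform in $[0,N_K)$: every element of $A_0\cap[0,N_K)$ actually lies in $[0,N_{K-1})$, so $A_0(r)$ reaches its maximum $M_1 M_3\cdots M_{K-1}$ already at $r = N_{K-1}$. Plugging this into the identity above gives $A_0(x)\,B_0(x) = 2N_K$ at $x = N_K + N_{K-1}$, so the ratio tends to $2$ at these points no matter how rapidly $(N_k)$ is chosen; the naive construction cannot attain the theorem. Overcoming this is what I expect to be the real content of the proof, and my plan is to follow Danzer's refinement: at each level factor $M_k = p_k q_k$ with $p_k,q_k$ both large and balanced, and redistribute the $k$-th digit's mass between the two sets according to this factorization so that the jumps of $A(x)$ and of $B(x)$ are staggered rather than simultaneous, filling the ``gap'' $[N_{K-1},N_K)$ of $A_0$ with the appropriate carefully chosen interpolating elements. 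The technical heart is an inductive bookkeeping argument showing that, with $(N_k)$ and $(p_k,q_k)$ chosen so that the level-$k$ overshoot is $O(1/\min(p_k,q_k)) = o(1)$, one obtains $A(x)B(x) = (1+o(1))x$ uniformly in $x$, which is the content of the theorem.
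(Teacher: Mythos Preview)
The paper does not contain a proof of this statement: Theorem~1 is quoted as background and attributed to Danzer~\cite{D}, with no argument given in the present paper. There is therefore nothing in the paper to compare your proposal against.

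As for the proposal itself: the digit-split construction $A_0,B_0$ and the counting on blocks $[N_K,N_{K+1})$ are correct, and you have correctly located the obstruction --- the naive split gives $\liminf A_0(x)B_0(x)/x = 1$ but $\limsup = 2$, independently of how fast $(N_k)$ grows. Your diagnosis of \emph{why} (the elements of $A_0\cap[0,N_K)$ are all crammed into $[0,N_{K-1})$) is also right, and the idea of splitting each radix factor as $M_k=p_kq_k$ and interleaving so that the jumps of $A$ and $B$ alternate on a finer scale is indeed the mechanism behind Danzer's construction.

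That said, what you have written is a plan, not a proof. The sentence ``redistribute the $k$-th digit's mass between the two sets according to this factorization'' is doing all the work and none of the work: you have not actually defined the refined sets $A,B$, and the phrases ``inductive bookkeeping argument'' and ``level-$k$ overshoot is $O(1/\min(p_k,q_k))$'' are assertions about a construction that has not been specified. The genuine content of Danzer's theorem is precisely in writing down that interleaved construction explicitly and carrying out the uniform estimate; until you do that, there is no way to check that the staggering really kills the factor of $2$ rather than merely moving it. If you intend this as a proof, you need to (i) give the explicit definition of $A$ and $B$ in terms of the doubled sequence of moduli, and (ii) prove the uniform bound $A(x)B(x)=(1+o(1))x$ from that definition.
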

In \cite{KMST} we introduced the concept of multiplicative complements. Let us denote by $\mathbb{Z}^+$ the set of positive integers and let $A_i \subseteq \mathbb{Z}^+$ for every $1\le i\le h$. The $h$-tuple $(A_1,\dots ,A_h)$ form multiplicative complements of order $h$ if every positive integers $n$ can be written as $n=a_1\dots a_h$, $a_i\in A_i$. For brevity, we will use the notation $MC_h$ for the set of multiplicative complements of order $h$. Similar to the additive complements, if $(A,B)\in MC_2$, then we have $A(x)B(x)\ge x$, therefore $$\liminf_{x\to \infty} \frac{A(x)B(x)}{x}\ge 1.$$ 
We show that, in contrast to the additive complements,
$$
\lim_{x\to \infty }\frac{A(x)B(x)}{x}=\infty
$$
for every infinite $(A,B)\in MC_2$.

We proved in \cite{KMST} the following statement:
\begin{tet}
For every $\varepsilon >0$, there exists infinite $(A,B)\in MC_2$ such that
$$
\liminf_{x\to \infty}\frac{ \max \{ A(x)B(x)\} }{\frac{x}{\log x}}\le 0.5+\varepsilon.
$$
\end{tet}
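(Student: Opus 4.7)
The plan is to construct $(A,B)\in MC_2$ along a rapidly growing sequence $x_1<x_2<\cdots$ so that $\max\{A(x_k),B(x_k)\}\le(0.5+\varepsilon)x_k/\log x_k$ for all $k$, thereby matching (up to $\varepsilon$) the natural lower bound $\max\{A(x),B(x)\}\ge(\pi(x)+1)/2\sim x/(2\log x)$. The lower bound itself follows from two forced memberships: $1\in A\cap B$ (since $1=ab$ demands $a=b=1$), and every prime $p\le x$ lies in $A\cup B$ (its only factorizations being $1\cdot p$ and $p\cdot 1$); hence $A(x)+B(x)\ge\pi(x)+1$.

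For a single scale $X=x_k$, set $y=X^{3/4}$ and build a pair $(A_k,B_k)$ by placing every integer of $[1,y]$ into $A_k\cap B_k$ and splitting the primes of $(y,X]$ into two disjoint subsets $P_A,P_B$ of nearly equal size, with $P_A\subseteq A_k$ and $P_B\subseteq B_k$. I would verify multiplicative covering of $[1,X]$ by case analysis on $n\le X$. If $n$ has a prime factor $p>y$, that factor is unique (two primes exceeding $\sqrt X$ multiply to more than $X$), and $m=n/p\le X^{1/4}\le y$ lies in $A_k\cap B_k$, so one of $(m,p)$ or $(p,m)$ factors $n$ since $p\in A_k\cup B_k$. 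Otherwise $n$ is $y$-smooth; then consecutive divisors of $n$ differ by a factor at most the largest prime factor of $n$ (which is $\le y$), and the window $[X^{1/4},X^{3/4}]$ necessarily contains a divisor $d$ of $n$---either the largest prime factor itself if it exceeds $X^{1/4}$, or else (when $n$ is $X^{1/4}$-smooth) a divisor forced in by the gap bound, since the window has multiplicative width $X^{1/2}>X^{1/4}$. Both $d$ and $n/d$ then satisfy $d,n/d\le y$, so they lie in $A_k\cap B_k$. Counting yields $\max\{A_k(X),B_k(X)\}=\lfloor y\rfloor+|P_A|=O(X^{3/4})+\tfrac12\pi(X)=(\tfrac12+o(1))X/\log X$.

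The main obstacle is merging the single-scale pairs into one global pair $(A,B)\in MC_2$ without the later scales polluting the earlier counts. The naive union fails: the inclusion $[1,x_k^{3/4}]\subseteq A_k$ for every $k$ would force the global $A$ to contain every sufficiently large initial segment, pushing $A(x_k)$ up to $x_k$. To avoid this, $x_k$ must grow very rapidly, and at each stage $k+1$ one inserts only the new elements in $(x_k,x_{k+1}]$---namely the new small integers in $(x_k,x_{k+1}^{3/4}]$ and the new split primes in $(x_{k+1}^{3/4},x_{k+1}]$---without touching $[1,x_k]$. The subtle remaining issue is that at stage $k+1$ the case~(i) factorization requires every cofactor $m=n/p\le x_{k+1}^{1/4}$ to lie in $A\cap B$, while the cumulative $A\cap B\cap[1,x_{k+1}^{1/4}]$ contains gaps $(x_j^{3/4},x_j]$ inherited from prior stages; this must be reconciled either by calibrating growth so that $x_{k+1}^{1/4}\le x_k^{3/4}$ (keeping all case~(i) cofactors inside the prior stage's bulk insertion) or by pre-inserting a small auxiliary set of ``bridge'' integers at each earlier stage whose total contribution is $o(x_j/\log x_j)$. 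Balancing these trade-offs so that $\max\{A(x_k),B(x_k)\}\le(\tfrac12+\varepsilon)x_k/\log x_k$ for all large $k$ is the crux of the argument and yields the claimed liminf bound.
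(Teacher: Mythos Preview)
This theorem is not proved in the present paper; it is quoted from the authors' companion paper \cite{KMST}, so there is no proof here to compare against. What follows is an assessment of your sketch on its own merits.

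The single-scale step is sound: with $y=X^{3/4}$, the pair $(A_k,B_k)$ you describe does cover $[1,X]$ multiplicatively and satisfies $\max\{A_k(X),B_k(X)\}=(\tfrac12+o(1))X/\log X$.

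The gluing, however, fails in a way neither of your proposed fixes repairs. After all stages, the cumulative $A\cap B$ is $[1,x_1^{3/4}]\cup\bigcup_{j\ge1}(x_j,\,x_{j+1}^{3/4}]$, so each interval $(x_j^{3/4},x_j]$ is a permanent hole in $A\cap B$; in that hole only the split primes from stage $j$ meet $A\cup B$, while composites there belong to neither $A$ nor $B$. Now take any composite $m\in(x_1^{3/4},x_1]$, say $m=q_1q_2$ with $q_1,q_2$ prime, and choose at some later stage a prime $p\in(x_{k+1}^{3/4},x_{k+1}]$ with $pm\le x_{k+1}$ (possible as soon as $x_{k+1}^{1/4}>x_1$). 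Set $n=pm=pq_1q_2$. Every divisor of $n$ exceeding $x_{k+1}^{3/4}$ is one of $pq_1$, $pq_2$, $n$, all composite and hence never inserted into $A$ or $B$; the remaining nontrivial factorizations $p\cdot m$ and $m\cdot p$ require $m\in B$ or $m\in A$, but $m$ lies in the hole. Thus $n$ has no representation $ab$ with $a\in A$, $b\in B$, and the glued pair is not in $MC_2$.

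Your first remedy, $x_{k+1}^{1/4}\le x_k^{3/4}$, only keeps the stage-$(k{+}1)$ cofactor below $x_k^{3/4}$; it does nothing to prevent it from landing in the earlier hole $(x_1^{3/4},x_1]$, and forcing $x_{k+1}^{1/4}\le x_1^{3/4}$ for all $k$ would make the sequence bounded. The ``bridge integers'' remedy may be the right instinct, but as written it is a name rather than a construction: you would need to specify exactly which composites get added, prove they repair every missing factorization (including case~(ii), where the divisor $d\in[x_{k+1}^{1/4},x_{k+1}^{3/4}]$ can also fall into an earlier hole), and verify that the added mass is $o(x_j/\log x_j)$ at every checkpoint $x_j$. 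Until that is carried out, the argument has a genuine gap at the gluing step.
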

Clearly, for every $(A,B)\in MC_2$ we have $\log min\{ A(x),B(x) \}\le \log x$. It follows that
\begin{kov}
For every $\varepsilon >0$, there exists infinite $(A,B)\in MC_2$ such that
$$
\liminf_{x\to \infty}\frac{ \max \{ A(x)B(x)\} \log \min \{ A(x),B(x) \} }{x}\le 0.5+\varepsilon.
$$
\end{kov}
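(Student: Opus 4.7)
This corollary is an immediate consequence of Theorem 3, combined with the trivial estimate singled out immediately before its statement, so the argument should fit in a single short paragraph. My plan is to apply Theorem 3 to obtain an infinite pair $(A,B)\in MC_2$ realising
$$\liminf_{x\to\infty}\frac{\max\{A(x),B(x)\}}{x/\log x}\le 0.5+\varepsilon,$$
and then to verify that the \emph{same} pair $(A,B)$ works for the corollary.

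The key observation is that for every $x\ge 1$ we have $A(x),B(x)\le x$, since $A(x),B(x)$ count integers in $\{0,1,\dots,x\}$. In particular $\log\min\{A(x),B(x)\}\le \log x$ for all sufficiently large $x$. Multiplying this inequality by the nonnegative quantity $\max\{A(x),B(x)\}/x$, I get
$$\frac{\max\{A(x),B(x)\}\cdot\log\min\{A(x),B(x)\}}{x}\;\le\;\frac{\max\{A(x),B(x)\}}{x/\log x}$$
for every large enough $x$. Taking $\liminf$ on both sides preserves the inequality, and applying Theorem 3 to the right-hand side yields the desired bound.

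No genuine obstacle is expected, as the entire argument is a single monotone comparison. The only point of care is interpretive: the expression $\max\{A(x)B(x)\}$ appearing in Theorem 3 and in the corollary should be read as $\max\{A(x),B(x)\}$ (otherwise the statement would be inconsistent with the inequality $A(x)B(x)\ge x$ noted earlier). Once that convention is fixed, the derivation above is complete and requires nothing beyond Theorem 3 as a black box.
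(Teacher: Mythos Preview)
Your argument is correct and matches the paper's own derivation exactly: the paper states the trivial bound $\log\min\{A(x),B(x)\}\le \log x$ just before the corollary and then writes ``It follows that'', which is precisely the one-line monotone comparison you carry out. Your remark that $\max\{A(x)B(x)\}$ must be read as $\max\{A(x),B(x)\}$ is also the correct reading of the typo.
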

We show that this bound can not be improved.
\begin{tet}\label{liminf:maxlogmin}
For every infinite $(A,B)\in MC_2$,
$$
\liminf_{x\to \infty}\frac{ \max \{ A(x)B(x)\} \log \min \{ A(x),B(x) \} }{x}>0.5.
$$
\end{tet}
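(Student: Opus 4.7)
I argue by contradiction. Suppose some infinite $(A,B)\in MC_2$ satisfies
$$L := \liminf_{x\to\infty}\frac{\max\{A(x),B(x)\}\log\min\{A(x),B(x)\}}{x}\leq\tfrac{1}{2}.$$
Fix a sequence $x_n\to\infty$ along which the ratio tends to $L$, and by the symmetry between $A$ and $B$ assume $M_n := A(x_n)\geq B(x_n)=:m_n$. The main tools are three: the \emph{counting inequality} $x_n\leq\sum_{b\in B,\,b\leq x_n}A(x_n/b)$ (every $n\leq x_n$ has a representation $n=ab$ in $A\cdot B$); the \emph{prime lower bound} $A(x_n)+B(x_n)\geq\pi(x_n)$ (each prime $p\leq x_n$ lies in $A\cup B$, since $p=1\cdot p$ is its only factorization); and the \emph{harmonic bound} $\sum_{b\in B,\,b\leq x_n}1/b\leq\log m_n+O(1)$ (from $b_k\geq k$ in the ordered enumeration of $B\cap[1,x_n]$).

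First I would combine the counting inequality with the harmonic bound: splitting the sum at $b=x_n/M_n$ and using $A(x_n/b)\leq M_n$ for small $b$ and $A(x_n/b)\leq x_n/b$ for large $b$ recovers $M_nm_n\geq x_n$. Combined with the prime lower bound $M_n\geq\pi(x_n)/2$, this already gives $M_n\log m_n\geq \tfrac{x_n}{2}-o(x_n)$. To promote this to a strict inequality, I would extract a constant-factor improvement of the prime lower bound via forced composites: writing $\alpha_n,\beta_n,\gamma_n$ for the fractions of primes in $[1,x_n]$ lying in $A\setminus B$, $B\setminus A$, $A\cap B$ respectively, every pair of distinct primes $p<q$ in $A\setminus B$ with $pq\leq x_n$ forces $pq\in A\cup B$ (its only factorizations are $(1,pq),(p,q),(q,p),(pq,1)$, and neither $p$ nor $q$ lies in $B$); symmetrically for $B\setminus A$. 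A Mertens-type count then yields
$$A(x_n)+B(x_n)\geq\pi(x_n)+c\,(\alpha_n^2+\beta_n^2)\tfrac{x_n\log\log x_n}{\log x_n}$$
for some absolute constant $c>0$.

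The contradiction is drawn via a case analysis. If $\alpha_n^2+\beta_n^2$ does not tend to $0$, the refined prime bound gives $M_n\log m_n\geq x_n/2+\Omega(x_n\log\log x_n)$, violating $L\leq\tfrac{1}{2}$. Otherwise $\gamma_n\to 1$, so almost all primes lie in $A\cap B$; both $A$ and $B$ then contain $(1-o(1))\pi(x_n)$ primes, giving $M_n,m_n\geq(1-o(1))\pi(x_n)$ and $M_n\log m_n\geq(1-o(1))x_n$, which also contradicts $L\leq\tfrac{1}{2}$.

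\textbf{Main obstacle.} The crux is the Mertens-type count of forced semiprimes: if the primes of $A\setminus B$ happen to be concentrated near $\sqrt{x_n}$, the number of pairs $p<q$ in $A\setminus B$ with $pq\leq x_n$ can be much smaller than $\alpha_n^2\,x_n\log\log x_n/\log x_n$. Securing a uniform lower bound of the correct order — perhaps by combining the forced-composite count with the quantitative form of the counting inequality, or by iterating to forced triples and prime powers of higher order — is the main technical hurdle. A further delicate point is the intermediate regime where $\alpha_n+\beta_n\to 0$ only slowly, so that neither the forced-composite nor the $\gamma_n\to 1$ branch is immediately conclusive; here one likely needs to feed the improved prime bound back into the counting inequality to close the gap.
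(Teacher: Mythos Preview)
Your baseline claim is wrong, and this breaks the whole argument. You assert that the counting inequality together with $M_n\geq\pi(x_n)/2$ ``already gives $M_n\log m_n\geq x_n/2-o(x_n)$.'' It does not. Take any infinite $(A,B)\in MC_2$ in which $B(x_n)$ grows very slowly, say $m_n=B(x_n)\asymp\log\log x_n$. The prime bound gives $M_n\gtrsim x_n/\log x_n$ (indeed $M_n\geq\pi(x_n)-m_n$ here), and the counting inequality $M_nm_n\geq x_n$ gives nothing better. Hence
\[
M_n\log m_n \;\lesssim\; \frac{x_n}{\log x_n}\cdot\log\log\log x_n \;=\; o(x_n),
\]
not $x_n/2-o(x_n)$. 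Your forced-semiprime refinement cannot rescue this: even in the most favourable case $\alpha_n\to1$, it only upgrades $M_n$ by a factor of $\log\log x_n$, which is still far short of the required $M_n\gtrsim x_n/\log m_n$. (You yourself flag the Mertens count as the main obstacle, but the deeper problem is that even a perfect semiprime count would not close the gap in the small-$m_n$ regime.)

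The paper's proof addresses exactly this regime. It splits according to the size of $\min\{A(x),B(x)\}$; the hard case is $B(x)<x^{1/2-\beta}/\log^2 x$. There the key observation is that any $n\leq x$ with no prime factor below $y:=B(x)\log^2 B(x)$ must essentially lie in $A$ (up to an $x/\log^2 B(x)$ error), so $A(x)$ dominates the count $\varphi(x,y)$ of $y$-rough numbers. Warlimont's quantitative form of Buchstab's theorem then gives $\varphi(x,y)\geq(0.5+\varepsilon_0+o(1))\,x/\log B(x)$, because $\log x/\log y>2+\delta/2$ forces the Buchstab function strictly above $1/2$. This yields $A(x)\log B(x)\geq(0.5+\varepsilon_0+o(1))\,x$ directly. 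That sieve step --- not a refinement of the prime/semiprime count --- is the missing idea.
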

If $|A|=\infty $ and $|B|=\infty $, then $\displaystyle \lim _{x\to \infty}\frac{\min \{ A(x),B(x) \} }{\log \min \{ A(x),B(x)\} }=\infty $. It follows that
\begin{kov}\label{multD}
For every infinite $(A,B)\in MC_2$,
$$
\lim _{x\to \infty} \frac{A(x)B(x)}{x}=\infty .
$$
\end{kov}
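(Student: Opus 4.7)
The plan is to derive the corollary as an almost immediate consequence of Theorem \ref{liminf:maxlogmin}, using only the trivial identity
$$A(x)B(x) = \min\{A(x),B(x)\}\cdot \max\{A(x),B(x)\}$$
together with the fact that $\min\{A(x),B(x)\}\to \infty$ when both $A$ and $B$ are infinite.

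First I would fix some constant $c>0.5$ guaranteed by Theorem \ref{liminf:maxlogmin}: by definition of liminf, there exists $x_0$ such that
$$\max\{A(x),B(x)\}\cdot \log\min\{A(x),B(x)\}>c\cdot x$$
for all $x\ge x_0$. (The formula in the statement contains the typo $\max\{A(x)B(x)\}$, which obviously means $\max\{A(x),B(x)\}$.) Dividing by $\log\min\{A(x),B(x)\}$, which is positive once $\min\{A(x),B(x)\}\ge 3$, and then multiplying by $\min\{A(x),B(x)\}$ gives
$$\frac{A(x)B(x)}{x}\;=\;\frac{\min\{A(x),B(x)\}\cdot \max\{A(x),B(x)\}}{x}\;>\;c\cdot \frac{\min\{A(x),B(x)\}}{\log\min\{A(x),B(x)\}}.$$

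Next I would invoke the elementary fact stated just before the corollary: since $|A|=|B|=\infty$, we have $\min\{A(x),B(x)\}\to \infty$, and hence $\min\{A(x),B(x)\}/\log\min\{A(x),B(x)\}\to \infty$ as well. Combined with the displayed inequality this gives $A(x)B(x)/x\to \infty$, as required.

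There is essentially no obstacle here; the content of the corollary is entirely contained in Theorem \ref{liminf:maxlogmin}, and the only thing to verify is that the elementary manipulation above is valid for all sufficiently large $x$, which is immediate since both $\log\min\{A(x),B(x)\}$ and $\min\{A(x),B(x)\}$ tend to infinity.
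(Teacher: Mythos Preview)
Your argument is correct and is exactly the paper's approach: the paper derives the corollary immediately from Theorem~\ref{liminf:maxlogmin} together with the observation (stated just before the corollary) that $\min\{A(x),B(x)\}/\log\min\{A(x),B(x)\}\to\infty$ when $A$ and $B$ are infinite.
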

In Theorem \ref{liminf:maxlogmin} we show that for infinite $(A,B)\in MC_2$, the fraction $\frac{ \max \{ A(x)B(x)\} \log \min \{ A(x),B(x) \} }{x}>0.5$ if $x$ is large enough. Next theorem shows that this fraction can be arbitrary large.
\begin{tet}\label{limsup:maxlogmin}
For every infinite $(A,B)\in MC_2$,
$$
\limsup_{x\to \infty}\frac{ \max \{ A(x)B(x)\} \log \min \{ A(x),B(x) \} }{x}=\infty.
$$
\end{tet}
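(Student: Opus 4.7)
The plan is to proceed by contradiction. Suppose there exist a constant $C$ and $X_0 \in \N$ so that $M(x)\log m(x) \le Cx$ for all $x \ge X_0$, where $M(x) = \max(A(x), B(x))$ and $m(x) = \min(A(x), B(x))$. The fundamental input is the counting inequality
\[
\sum_{a \in A,\, a \le x} B(x/a) \;\ge\; x,
\]
valid because every integer $n \le x$ admits at least one factorisation $n = ab$ with $a \in A$, $b \in B$. I would try to show that the contradiction hypothesis forces the left hand side to be $o(x)$ along some subsequence.

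The main substitution is $B(y) \le M(y) \le Cy/\log m(y)$, which is valid whenever $m(y) \ge 2$, i.e.\ whenever $y \ge y_0$ for some fixed $y_0$ depending on $(A,B)$. Plugging this into the counting inequality and treating the range $a > x/y_0$ separately (its contribution is at most $A(x)\cdot B(y_0) = o(x)$, since $A(x) \le M(x) = o(x)$ by the hypothesis), one gets, for every $x$ large,
\[
\sum_{a \in A,\, a \le x/y_0} \frac{1}{a\,\log m(x/a)} \;\ge\; \frac{1}{2C}.
\]
The game now is to upper bound this sum and show it is actually $o(1)$ along a suitable sequence of $x$'s.

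To do this I would split the $a$-range at $a = x^{1-\varepsilon}$ for a small $\varepsilon>0$ to be chosen. For $a \le x^{1-\varepsilon}$, $\log m(x/a) \ge \log m(x^\varepsilon)$, so the partial sum is at most $\log m(x^\varepsilon)^{-1} \cdot \sum_{a \in A,\, a \le x} 1/a$. Using $A(t) \le Ct/\log m(t)$ and Abel summation,
\[
\sum_{a \in A,\, a \le x} \frac{1}{a} \;\le\; C\int_1^x \frac{dt}{t\log m(t)} + O(1) \;=\; C\int_0^{\log x} \frac{ds}{\log m(e^s)} + O(1),
\]
which is $o(\log x)$ because $\log m \to \infty$. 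For $x^{1-\varepsilon} < a \le x/y_0$ the corresponding integral runs only over $s \in [(1-\varepsilon)\log x, \log x]$, so this tail is dominated by $\varepsilon\log x / \log m(x^{1-\varepsilon})$ divided by a factor $\log m(y_0)$. Choosing $\varepsilon$ small enough depending on $C$, and then letting $x \to \infty$ through a sequence where $\log m(x^\varepsilon)$ grows sufficiently fast, both parts become smaller than $1/(4C)$, contradicting the lower bound above.

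The hardest part is that the growth rate of $m$ is not specified a priori, and can in principle be very slow; the estimates above compare $\int_0^L ds/\log m(e^s)$ to $\log m(e^{\varepsilon L})$, and the subsequence of $x$'s along which the contradiction materialises has to be chosen with care (essentially along an $x$ where $\log m(x^\varepsilon)$ is not too small compared to the average $\tfrac{1}{\log x}\int_0^{\log x} ds/\log m(e^s)$). A secondary technical issue is that the WLOG assumption $A(x) \le B(x)$ (needed to identify $m = A$ and $M = B$ in $B(y) \le Cy/\log m(y)$) need not hold uniformly; this is handled either by passing to a subsequence on which it does, or by applying the same argument to the symmetric counting identity $\sum_{b \in B,\, b \le x} A(x/b) \ge x$ and averaging.
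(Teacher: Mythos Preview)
Your approach differs from the paper's, and the gap you yourself flag as ``the hardest part'' is real and unresolved. Write $g(s)=\log m(e^{s})$ and $L=\log x$; your first partial sum is bounded by $g(\varepsilon L)^{-1}\int_{0}^{L}g(s)^{-1}\,ds$, and your second by roughly $\varepsilon L\big/\bigl(g((1-\varepsilon)L)\log m(y_0)\bigr)$. But the contradiction hypothesis $M(x)\log m(x)\le Cx$ is compatible with $m$ growing arbitrarily slowly, say $m(t)\asymp\log t$, i.e.\ $g(s)\asymp\log s$. In that regime $\int_{0}^{L}g(s)^{-1}\,ds\asymp L/\log L$ while $g(\varepsilon L)\asymp\log L$, so your first bound is $\asymp L/(\log L)^{2}\to\infty$; likewise your second bound is $\asymp\varepsilon L/\log L\to\infty$, for \emph{every} fixed $\varepsilon>0$ and every $L$. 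No subsequence helps: these quantities diverge along every sequence. More generally the comparison fails whenever $g(L)=o(\sqrt{L})$, i.e.\ whenever $m(x)=\exp\bigl(o(\sqrt{\log x})\bigr)$, and nothing in your setup excludes this.

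The paper proceeds through exactly this dichotomy. If $\liminf_{x\to\infty}\log m(x)/\sqrt{\log x}=\infty$ the conclusion is immediate from the earlier bound $\limsup M(x)\sqrt{\log x}/x\ge \pi^{-1/2}$ (Theorem~\ref{limsup:max}). Otherwise, say $B(y_n)<e^{D\sqrt{\log y_n}}$ along a subsequence, the paper first shows from $A(x)=o(x)$ that $B(x_n)>\tfrac{1}{2}\sqrt{x_n}$ along another subsequence with $e^{\sqrt{\log x_n}}>y_n$, and then produces a \emph{direct} lower bound on $A(x_n)$ by counting integers $pq\le x_n$ where $p<y_n$ is a prime not in $B$ and $q>y_n$ is a prime not among the largest prime factors of elements of $B\cap[1,x_n]$; every such $pq$ must lie in $A$, and there are $\gg (x_n/\log x_n)\log\log y_n$ of them. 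Combined with $\log B(x_n)\gg\log x_n$ this gives $A(x_n)\log B(x_n)/x_n\gg\log\log y_n\to\infty$. The missing ingredient in your argument is precisely this constructive lower bound on $A$ in the slow-$m$ regime; the aggregate inequality $\sum_{a\in A}B(x/a)\ge x$ is too blunt to detect it.
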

We proved in \cite{KMST} that
\begin{tet}\label{limsup:max}
\begin{enumerate}
\item For every $(A,B)\in MC_2$,
$$
\limsup_{x\to \infty }\frac{max\{ A(x),B(x) \}}{\frac{x}{\sqrt{\log x}}}\ge \frac{1}{\sqrt{\pi }}.
$$
\item There exists an $(A,B)\in MC_2$ such that
$$
A(x)=\left( \frac{1}{\sqrt{\pi }}+o(1) \right) \frac{x}{\sqrt{\log x}}\quad \hbox{ and }\quad B(x)=\left( \frac{1}{\sqrt{\pi }}+o(1) \right) \frac{x}{\sqrt{\log x}}.
$$
\end{enumerate}
\end{tet}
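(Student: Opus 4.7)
I argue by contradiction. Suppose the conclusion fails, so there is a constant $C$ with $M(x)\log m(x) \le Cx$ for all $x \ge x_0$. Then both
$$
A(y) \le \frac{Cy}{\log m(y)}, \qquad B(y) \le \frac{Cy}{\log m(y)} \qquad (y\ge x_0),
$$
and since $|A|=|B|=\infty$ we have $m(y)\to\infty$, hence $\log m(y)\to\infty$.

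The core tool is the trivial MC-counting inequality
$$
x \;\le\; \bigl|\{(a,b)\in A\times B : ab\le x\}\bigr| \;=\; \sum_{a\in A,\,a\le x} B(x/a).
$$
I split the range of $a$ dyadically into $I_k=(x/2^{k+1},\,x/2^k]$ for $k=0,1,\ldots,\lfloor\log_2 x\rfloor$. On $I_k$ one has $B(x/a)\le B(2^{k+1})$ and the block contains at most $A(x/2^k)$ elements of $A$. Inserting the pointwise upper bounds above gives
$$
x \;\le\; \sum_{k} A(x/2^k)\,B(2^{k+1}) \;\le\; 2C^{2}\,x \sum_{k=0}^{\lfloor\log_2 x\rfloor} \frac{1}{\log m(2^{k+1})\,\log m(x/2^k)},
$$
so that
$$
\sum_{k=0}^{\lfloor\log_2 x\rfloor} \frac{1}{\log m(2^{k+1})\,\log m(x/2^k)} \;\ge\; \frac{1}{2C^{2}} \qquad (\dagger)
$$
for every $x\ge x_0$.

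The second part of the plan is to show that $(\dagger)$ must eventually fail, yielding the contradiction. The idea is to bootstrap the growth of $m$ using Corollary \ref{multD}. By that corollary, $A(y)B(y)/y = M(y)m(y)/y\to\infty$, and combining with the hypothesis $M(y)\log m(y)\le Cy$ gives $m(y)/\log m(y)\ge M(y)m(y)/(Cy)\to\infty$. Feeding this back, the right-hand side of $(\dagger)$ can be bounded using the improved lower bound on $\log m$; the dyadic sum then splits into a manageable ``central'' range (where both $2^{k+1}$ and $x/2^k$ are large enough that $\log m$ is huge) and two short boundary tails (which vanish). The prime input $A(y)+B(y)\ge\pi(y)$ (since $1\in A\cap B$ and every prime equals $1\cdot p$) supplies the extra quantitative leverage needed to force $\log m(y)\log m(\sqrt{y})\gg \sqrt{\log y}$ on a subsequence, which is what makes $(\dagger)$ fail.

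\textbf{Main obstacle.} The crux is the bootstrap. The naive dyadic bound $(\dagger)$ is consistent with $\log m(y)\sim c\sqrt{\log y}$ (the Cauchy–Schwarz critical profile, for which the sum is $\asymp \pi/c^{2}$), so getting a genuine contradiction requires extracting more from the counting inequality than the crude product of pointwise upper bounds. The hard step is to show that no such slowly growing $m$ is compatible with the full MC condition: this is where one must invoke, beyond $(\dagger)$, either a Cauchy–Schwarz/second-moment refinement exploiting the symmetry $k\leftrightarrow\lfloor\log_2 x\rfloor-k$, or auxiliary counts over semiprimes $n=pq$ (each of which forces $pq\in A\cup B$ or a cross-placement of $p,q$) to produce a lower bound on $A(x)B(x)$ strictly larger than what $m(x)\log m(x)\le Cx$ permits.
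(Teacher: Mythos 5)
Your proposal does not establish the stated theorem, and the problems begin with the statement itself. You argue by contradiction from the hypothesis ``$M(x)\log m(x)\le Cx$ for all large $x$,'' but that is the negation of Theorem \ref{limsup:maxlogmin}, not of this one. The negation of part~1 here would be that $\max\{A(x),B(x)\}\le\left(\frac{1}{\sqrt{\pi}}-\varepsilon\right)\frac{x}{\sqrt{\log x}}$ for all large $x$; no $\log\min\{A(x),B(x)\}$ factor is involved. Moreover the theorem has a second part --- an explicit construction of $(A,B)\in MC_2$ with both counting functions asymptotic to $\frac{1}{\sqrt{\pi}}\frac{x}{\sqrt{\log x}}$ --- which your proposal does not touch. (For the record, the present paper does not prove this theorem either: it is quoted from \cite{KMST}, so there is no in-paper argument to compare against; but the proposal must still stand on its own.)

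Even read charitably as an attack on a lower bound, the argument is a plan rather than a proof, and your own ``main obstacle'' paragraph concedes the decisive step is missing: the dyadic inequality $(\dagger)$ is consistent with the critical profile, so inserting pointwise upper bounds into $x\le\sum_{a\in A,\,a\le x}B(x/a)$ can never isolate the sharp constant. That constant is $\frac{1}{\sqrt{\pi}}=\frac{1}{\Gamma(1/2)}$, a Selberg--Delange/Sathe--Selberg type quantity; extracting it requires genuine input on the distribution of the number of prime factors of $n$ and of its divisors (e.g.\ that any factorization $n=ab$ forces one of $a,b$ to have at most half the prime factors of $n$, counted with multiplicity), not merely the cardinality bookkeeping in $(\dagger)$. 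The auxiliary devices you gesture at (Cauchy--Schwarz symmetrization, semiprime counts, $A(y)+B(y)\ge\pi(y)$) are not developed to the point where one could check they close the gap, and the semiprime heuristic as stated (``$k=pq$ must belong to $A$'') belongs to the proof of Theorem \ref{limsup:maxlogmin}, where the minimum is assumed tiny --- an assumption you do not have here.
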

In the next three theorems we consider the function $\min \{ A(x),B(x)\}$. It is easy to see that for every $\varepsilon >0$, there exists an $(A,B)\in MC_2$ such that $|B|<\infty $ and $\displaystyle \limsup_{x\to \infty} \frac{A(x)}{x}<\varepsilon $. On the other hand, if $|B|<\infty $ and $(A,B)\in MC_2$, then $\displaystyle \liminf_{x\to \infty} \frac{A(x)}{x}>0$. Therefore, the natural requirement is that $A(x)=o(x)$ and $B(x)=o(x)$.
\begin{tet}\label{liminffx}
Let $f(x)$ be a function such that $f(x)\to \infty $ as $x\to \infty $. Then there exists infinite $(A,B)\in MC_2$ such that $A(x)=o(x)$, $B(x)=o(x)$ and
$$
\liminf_{x\to \infty }\frac{\min\{ A(x),B(x) \}}{f(x)}=0.
$$
 \end{tet}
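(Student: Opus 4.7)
By Corollary~\ref{multD}, $A(x)B(x)/x\to\infty$ for any infinite multiplicative complements, so making both counting functions tiny simultaneously is impossible. The strategy is therefore to keep $B(x)$ very small along a subsequence $x_k\to\infty$ and let $A$ absorb the bulk, while still having $A(x)=o(x)$. Since $f\to\infty$, aiming for $B(x_k)=o(f(x_k))$ and $A(x_k)\gtrsim x_k/B(x_k)$ is consistent both with $A(x),B(x)=o(x)$ and with the forced lower bound $A(x)B(x)\ge x$.

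Concretely, I would choose an increasing sequence $x_0<x_1<x_2<\cdots$ so rapidly that $f(x_k)\ge k^2$ (possible since $f\to\infty$) and build $A$, $B$ blockwise on $I_k=(x_{k-1},x_k]$. At stage $k$ I add to $B$ a carefully chosen ``divisor-rich'' set $B_k\subseteq I_k$ with the cumulative bound $B(x_k)\le f(x_k)/k$. For each $n\in I_k$, if $n$ has a divisor $b\in B\cap[1,x_k]$ with $b>1$, add $a=n/b$ to $A$; otherwise put $n$ itself into $A$ (using the trivial factorization $n=n\cdot 1$ with $1\in B$). Then $(A,B)\in MC_2$ by construction, and $B(x_k)/f(x_k)\le 1/k\to 0$ gives the $\liminf$ conclusion, while $B(x)=o(x)$ is immediate from $B(x_k)\le f(x_k)/k=o(x_k)$.

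The crucial estimate is $A(x)=o(x)$. The $n$'s that enter $A$ directly (with $b=1$) are exactly those with no $B$-divisor $>1$, whose density is, by inclusion--exclusion, bounded above in terms of $\prod_{b\in B,\,b\le x}(1-1/b)$; this tends to $0$ provided $\sum_{b\in B}1/b=\infty$. One arranges this by populating $B_k$ with, say, the primes in a dyadic subinterval of $I_k$, each such block contributing roughly $1/\log x_k$ to the reciprocal sum, whose divergence across $k$ is forced by the rapid growth of $x_k$. The remaining contribution to $A$, coming from quotients $a=n/b$ with $b\in B$, $b>1$, is controlled by a standard multiplication-table count, giving $A(x)=o(x)$.

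The \emph{main obstacle} is striking the balance between sparsity of $B$ (needed for the $\liminf$) and enough divisor richness of $B$ (needed to make $A$ sparse). The tension is real: if $B$ is too sparse then $\sum 1/b<\infty$, leaving a positive-density set of $n$ that must be thrown wholesale into $A$, destroying $A=o(x)$. The resolution exploits our freedom that $f(x_k)/k\to\infty$: this slack lets us pack enough divisor-rich elements into each $B_k$ to force $\sum 1/b=\infty$ while still satisfying $B(x_k)\le f(x_k)/k$ along the chosen subsequence. Once this balance is set up, checking $A(x),B(x)=o(x)$ and the $\liminf$ reduces to routine estimates.
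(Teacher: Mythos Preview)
Your plan has a genuine gap at the step where you claim the ``quotient'' contribution to $A$ is $o(x)$ via a ``standard multiplication-table count.'' The multiplication-table theorem says that the set of products $\{ij:i,j\le N\}$ is sparse in $[1,N^2]$; it says nothing about sets of quotients, and here the structure runs the wrong way. Concretely: if $B=\{1\}\cup\{p_1<p_2<\cdots\}$ with the $p_i$ prime, then for \emph{every} integer $a$ the number $n=ap_1$ has a divisor in $B\setminus\{1\}$, and the natural choice $b=p_1$ returns exactly $a$ as the quotient. So unless you specify a very delicate rule for which $b$ to pick, the quotient set is all of $\mathbb Z^+$ and $A=\mathbb Z^+$. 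Even granting an optimal rule, it is not at all clear that $A(x)=o(x)$ is achievable when $B$ is $\{1\}$ together with a set of primes; for instance, for $n=p^k$ with $p\in B$ the only admissible $b>1$ is $b=p$, which forces $p^{k}\in A$ or $p^{k-1}\in A$ for every $k$. The divergence of $\sum_{b\in B}1/b$ that you arrange handles only the $b=1$ part of $A$; it does nothing for the quotients, and that is where the density of $A$ actually hides. (There is also a smaller slip: a single dyadic block of primes near $x_k$ contributes about $1/\log x_k$ to $\sum 1/b$, so \emph{rapid} growth of $x_k$ makes $\sum_k 1/\log x_k$ harder, not easier, to diverge.)

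The paper avoids this difficulty by reversing the roles. It fixes $A$ structurally rather than $B$: choose a set of primes $Q$ placed in well-separated bursts $(M_k,N_k]$, take $A$ to be the squarefree integers supported on $Q$, and invoke Lemma~\ref{Q} (which needs only $\sum_{q\in Q}1/q=\infty$) to manufacture a companion $B$ with $B(x)=o(x)$. Because $Q\cap(N_{k-1},M_k]=\emptyset$, every element of $A$ below $M_k$ is a product of primes $\le N_{k-1}$, hence $A(M_k)\le 2^{\pi(N_{k-1})}$; choosing $M_k$ so large that $f(M_k)>k\,2^{N_{k-1}}$ yields $A(M_k)/f(M_k)<1/k$. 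Thus the smallness along a subsequence is read off directly from the explicit description of $A$, and the delicate ``make the complement sparse'' work is pushed into Lemma~\ref{Q}, where it is done once and for all. Your blockwise idea might be salvageable if you let $B$ carry more multiplicative structure than just primes, but as written the step that forces $A(x)=o(x)$ is missing an argument.
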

As a corollary, we get that in Corollary \ref{multD} the function $x$ can not be replaced by any other function $g(x)$, where $\frac{g(x)}{x}\to \infty $ as $x\to \infty$.
\begin{kov}
  Let $g(x)$ be a function such that $\displaystyle \lim_{x\to \infty}\frac{g(x)}{x}=\infty $. Then there exist an $(A,B)\in MC_2$ such that
  $$
  \limsup_{x\to \infty }\frac{A(x)B(x)}{g(x)}=0.
  $$
\end{kov}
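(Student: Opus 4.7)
My approach is to take $B$ as large as possible and $A$ as sparse as possible subject to $(A,B)\in MC_2$. Concretely, I would set $B=\mathbb{Z}^+$ and choose $A$ to be a very thin set containing $1$. Then $(A,B)\in MC_2$ automatically, since every positive integer $n$ factors as $n=1\cdot n$. With this choice $B(x)=x$, so the quantity to control reduces to
\[
\frac{A(x)B(x)}{g(x)}=\frac{A(x)}{g(x)/x},
\]
and the task becomes: construct an infinite $A\subseteq\mathbb{Z}^+$ with $A(x)=o(g(x)/x)$.

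Write $h(x):=g(x)/x$, so that $h(x)\to\infty$ by hypothesis. For each $n\ge 1$ let $X_n$ be a threshold such that $h(y)>n^2$ for all $y\ge X_n$ (such a threshold exists because $h\to\infty$). I would then pick integers $a_1<a_2<\dots$ recursively by $a_n:=\max(a_{n-1}+1,X_n)$, and define $A:=\{1\}\cup\{a_n:n\ge 1\}$. For any $x\ge a_1$, if $n$ is the index with $a_n\le x<a_{n+1}$, then $A(x)\le n+1$ while $h(x)>n^2$, giving
\[
\frac{A(x)B(x)}{g(x)}\;\le\;\frac{n+1}{n^2}\;\longrightarrow\;0.
\]
Hence in fact $\lim_{x\to\infty} A(x)B(x)/g(x)=0$, which is stronger than the $\limsup=0$ asserted.

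There is no serious obstacle: the only mild subtlety is that $h$ need not be monotone, so one cannot simply invert it. This is why I work with the thresholds $X_n$ rather than the pointwise values $h(a_n)$; the key point is just that $h$ eventually exceeds any fixed constant, which is exactly the hypothesis $g(x)/x\to\infty$. The construction also confirms the interpretation of the corollary, namely that the function $x$ in Corollary \ref{multD} is optimal: even though $A(x)B(x)/x\to\infty$ forces some growth, that growth can be made as slow as desired.
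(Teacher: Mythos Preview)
Your argument is correct for the corollary exactly as stated: with $B=\mathbb{Z}^+$ and $1\in A$, the pair is trivially in $MC_2$, and your threshold construction for the $a_n$ cleanly forces $A(x)=o(g(x)/x)$, hence $\lim_{x\to\infty}A(x)B(x)/g(x)=0$. (Indeed, already $A=\{1\}$, $B=\mathbb{Z}^+$ gives $A(x)B(x)=x=o(g(x))$, so even the infinite $A$ is more than is strictly needed.)

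The paper, however, presents this as a corollary of Theorem~\ref{liminffx}, and the intended derivation is different in substance. One applies Theorem~\ref{liminffx} with $f(x)=g(x)/x$ to obtain an \emph{infinite} pair $(A,B)\in MC_2$ with the additional nondegeneracy $A(x)=o(x)$ and $B(x)=o(x)$ and $\liminf_{x\to\infty}\min\{A(x),B(x)\}/f(x)=0$; since $A(x)B(x)\le x\cdot\min\{A(x),B(x)\}$, this yields $\liminf_{x\to\infty}A(x)B(x)/g(x)=0$. The gain is that both factors are $o(x)$, which is precisely the ``natural requirement'' the paper isolates just before Theorem~\ref{liminffx}; your choice $B=\mathbb{Z}^+$ sits in the degenerate regime the paper explicitly sets aside. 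Conversely, your construction actually delivers the full $\limsup=0$ (equivalently the limit), whereas the route via Theorem~\ref{liminffx} only yields $\liminf=0$; so the ``$\limsup$'' in the printed corollary appears to be a slip for ``$\liminf$'', and your elementary example happens to rescue the stronger formulation at the cost of the nondegeneracy of $B$.
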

The following theorem shows that for $(A,B)\in MC_2$ the function $\min\{ A(x),B(x) \}$ can be arbitrary large.
\begin{tet}\label{limsupfxno}
Let us suppose that for some function $f(x)>0$, $x\ge 1$, the series $\displaystyle \sum_{n=1}^{\infty}\frac{f(n)}{n^2}$ converges. Then there is no $(A,B)\in MC_2$ such that $A(x)=O(f(x))$ and $B(x)=o(x)$.
\end{tet}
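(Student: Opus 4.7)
I would argue by contradiction: suppose some $(A,B) \in MC_2$ satisfies $A(x) = O(f(x))$ and $B(x) = o(x)$. The covering property that each $n \le N$ has a representation $n = ab$ with $a \in A,\, b \in B$ yields the fundamental inequality
$$N \;\le\; \sum_{\substack{a \in A \\ a \le N}} B(N/a),$$
and the strategy is to massage this into a linear lower bound $A(N) = \Omega(N)$, which turns out to be incompatible with $\sum f(n)/n^2 < \infty$.

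The first step is to show that $S := \sum_{a \in A} 1/a$ is finite. By Abel summation,
$$\sum_{\substack{a \in A \\ a \le N}} \frac{1}{a} \;=\; \frac{A(N)}{N} + \sum_{n=1}^{N-1} \frac{A(n)}{n(n+1)} \;\le\; 1 + \sum_{n=1}^{\infty} \frac{A(n)}{n^{2}},$$
so the hypothesis $A(n) = O(f(n))$ combined with convergence of $\sum f(n)/n^{2}$ bounds the partial sums uniformly in $N$.

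The second step is the main idea. Fix $\varepsilon > 0$ with $\varepsilon S < 1/2$, then choose $x_{0}$ so that $B(x) \le \varepsilon x$ for every $x \ge x_{0}$. Split the fundamental sum at the threshold $a = N/x_{0}$: for $a \le N/x_{0}$ the argument $N/a \ge x_{0}$ sits in the linear regime, so these terms contribute at most $\varepsilon N \cdot S < N/2$; for $a > N/x_{0}$ the argument is at most $x_{0}$, so $B(N/a) \le B(x_{0})$ and these terms contribute at most $B(x_{0}) \cdot A(N)$. The inequality becomes $N \le N/2 + B(x_{0}) A(N)$, hence $A(N) \ge N/(2B(x_{0}))$ for all sufficiently large $N$. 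Combined with $A(N) = O(f(N))$ this forces $f(N) \ge cN$ for some $c > 0$ and large $N$, whence $\sum f(n)/n^{2} \ge c \sum 1/n = \infty$, a contradiction.

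The main obstacle is recognizing the clean two-regime decomposition at $a = N/x_{0}$: on the small-$a$ side one exploits $B(x) = o(x)$ to extract the factor $\varepsilon S$, and on the large-$a$ side one exploits boundedness of $B$ on $[1,x_{0}]$ to introduce $A(N)$ on the right. The hypothesis $\sum f(n)/n^{2} < \infty$ is used in two essential places: once to guarantee $S < \infty$ (so that $\varepsilon S < 1/2$ is achievable), and once to be contradicted by the final conclusion $f(N) = \Omega(N)$. Everything else (Abel summation, the trivial bound $A(N)/N \le 1$) is routine.
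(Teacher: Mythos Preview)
Your argument is correct and shares the paper's backbone: the covering inequality $N \le \sum_{a\in A,\,a\le N} B(N/a)$ together with Abel summation to show $S=\sum_{a\in A}1/a<\infty$. The only real difference is in the split. The paper fixes a threshold $n_1$ on $a$ (independent of $x$) so that the tail $\sum_{a\ge n_1}1/a<\tfrac12$; then the finitely many terms with $a<n_1$ each satisfy $B(x/a)=o(x)$, while the tail is bounded trivially by $x\sum_{a\ge n_1}1/a<x/2$, giving the immediate contradiction $x\le(0.5+o(1))x$. You instead use a moving threshold $a=N/x_0$: on the small-$a$ side you invoke $B(y)\le\varepsilon y$ for $y\ge x_0$, and on the large-$a$ side you get a residual $B(x_0)\,A(N)$, which forces $A(N)=\Omega(N)$ and then a second appeal to $\sum f(n)/n^2<\infty$. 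Both work; the paper's split is marginally cleaner because it reaches the contradiction in one stroke without circling back to $f$, whereas your version has the small advantage of making the quantitative lower bound $A(N)\ge N/(2B(x_0))$ explicit.
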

As a corollary, we get
\begin{kov}
  Let $\varepsilon >0$. Then there is no infinite $(A,B)\in MC_2$, $B(x)=o(x)$ such that $A(x)=O\left( \frac{x}{(\log x)(\log \log x )^{1+\varepsilon }}\right)$.
\end{kov}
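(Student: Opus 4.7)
I would argue by contradiction. Suppose $(A,B)\in MC_2$ satisfies $A(x)\le c_1 f(x)$ for every sufficiently large $x$ and also $B(x)=o(x)$; my task is to show that $\sum_n f(n)/n^2$ must diverge. The argument starts from the trivial double-counting bound
\[
 N \;\le\; \sum_{\substack{a\in A \\ a\le N}} B\!\left(\tfrac{N}{a}\right), \qquad N\ge 1,
\]
which records the fact that each $n\in\{1,\dots,N\}$ admits at least one representation $n=ab$ with $a\in A$, $a\le N$, and $b\in B$, $b\le N/a$.

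Next, I would use the sparsity of $B$: fix a small $\varepsilon>0$ and choose $x_0=x_0(\varepsilon)$ such that $B(y)\le \varepsilon y$ for every $y\ge x_0$. Splitting the above sum at the threshold $a=N/x_0$ and applying the trivial bound $B(N/a)\le x_0$ for $a>N/x_0$ gives
\[
 N \;\le\; \varepsilon N \sum_{\substack{a\in A \\ a\le N/x_0}} \frac{1}{a} \;+\; x_0\, A(N).
\]
The harmonic-type sum over $A$ is then controlled by partial summation:
\[
 \sum_{\substack{a\in A \\ a\le M}} \frac{1}{a} \;=\; \frac{A(M)}{M} + \sum_{k=1}^{M-1} \frac{A(k)}{k(k+1)} \;\le\; 1 + c_1\sum_{k=1}^{\infty} \frac{f(k)}{k^2} \;=:\; C,
\]
which is finite by hypothesis and independent of $M$. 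Substituting back, $N\le \varepsilon C N + c_1 x_0 f(N)$, so the choice $\varepsilon = 1/(2C)$ forces $f(N)\ge N/(2c_1 x_0)$ for all sufficiently large $N$. But then $\sum_n f(n)/n^2 \ge \sum_{n\ge N_0} \frac{1}{2c_1 x_0\, n} = \infty$, contradicting the assumption on $f$.

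The heart of the proof — and its main conceptual obstacle — is recognising that the single hypothesis $\sum f(n)/n^2<\infty$ plays a \emph{dual role}: pointwise it bounds $A(x)$, while globally, via the partial-summation identity above, it also forces $\sum_{a\in A}1/a$ to converge. This second, integrated consequence is precisely what allows the purely asymptotic hypothesis $B(x)=o(x)$ to be promoted to a global sub-linear bound on $\sum_{a\le N,\,a\in A} B(N/a)$, which then clashes with the multiplicative covering property of $(A,B)$. Once the Abel-summation bound on $\sum_{a\in A}1/a$ is in hand, the rest of the argument is a short two-term estimate; without it, the $o(x)$ condition on $B$ cannot be exploited uniformly in $a$.
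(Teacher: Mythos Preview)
Your argument is correct. In the paper the corollary is not proved separately: it is an immediate consequence of the preceding Theorem (``if $\sum f(n)/n^2<\infty$ then no $(A,B)\in MC_2$ has $A(x)=O(f(x))$ and $B(x)=o(x)$''), once one checks that $\sum_n \frac{1}{n(\log n)(\log\log n)^{1+\varepsilon}}<\infty$; you omit this last verification, but it is standard (integral test). What you have actually written is a proof of that Theorem, and your proof follows the same line as the paper's: Abel summation turns $A(x)=O(f(x))$ together with $\sum f(n)/n^2<\infty$ into $\sum_{a\in A}1/a<\infty$, and this is then played against the covering inequality $x\le\sum_{a\in A,\,a\le x}B(x/a)$ to contradict $B(x)=o(x)$. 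The only cosmetic difference is in how the sum is split: the paper fixes a threshold $n_1$ with $\sum_{a\ge n_1}1/a<\tfrac12$, bounds the finitely many terms $a<n_1$ by $B(x/a)=o(x)$ and the tail by $\sum_{a\ge n_1}x/a<x/2$, reaching $x\le(0.5+o(1))x$ directly; you instead split at $a=N/x_0$ and pass through the intermediate conclusion $f(N)\gg N$. Both splits are equivalent in strength, and your concluding paragraph correctly identifies the key mechanism.
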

If the function $f(x)$ satisfies some smoothness conditions and $\displaystyle \sum_{n=1}^{\infty}\frac{f(n)}{n^2}=\infty $, then we can find $(A,B)\in \mathbb{Z}^+$ such that $A(x)=O(f(x))$ and $B(x)=o(x)$ as $x\to \infty$.
\begin{tet}\label{limsupfxyes}
Let us suppose that the function $f(x)>0$ satisfies the following conditions
\begin{enumerate}
  \item $f(x)$ is monotonically increasing for $x\ge 1$,
  \item $f(x)=o(\frac{x}{\log x})$ as $x\to \infty$,
  \item there exist $c_1,c_2>0$ such that $c_1\le \frac{\frac{f(x_2)}{f(x_1)}}{\frac{x_2}{x_1}}\le c_2$ for $1\le x_1\le x_2\le x_1^2$,
  \item $\sum_{n=1}^{\infty}\frac{f(n)}{n^2}=\infty $.
\end{enumerate}
Then there exists $(A,B)\in MC_2$ such that $A(x)=O(f(x))$ and $B(x)=o(x)$.
\end{tet}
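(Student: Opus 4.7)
The plan is to build $(A,B)\in MC_2$ by a scale-dependent smooth/rough decomposition whose parameters are tuned to $f$ through Dickman's function. First, using the asymptotic $\Psi(x,y)\sim x\rho(\log x/\log y)$ together with conditions (2) ($f(x)/x\to 0$, forcing $\log x/\log y(x)\to\infty$) and (4) (ensuring $y(x)$ does not rise too fast), I would define a monotonic function $y:[1,\infty)\to[2,\infty)$ with $y(x)\to\infty$ such that $\Psi(x,y(x))\asymp f(x)$. Condition (3) (the doubling estimate for $f/x$) translates into a slow-variation statement for $y$: $y(cx)/y(x)\to 1$ for every fixed constant $c>1$.

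With $y$ in hand I would set
$$
B := \{b\in\mathbb Z^+ : P^-(b)>y(b)\},\qquad
A := \{1,2,\ldots,M\}\cup\bigl\{a : P^+(a)\le y(a)\bigr\},
$$
including a finite initial segment $[1,M]$ to absorb small-scale pathologies. Mertens' theorem then yields $|B\cap[1,x]|\asymp x/\log y(x)=o(x)$, and by monotonicity of $y$ one has $A(x)\le M+\Psi(x,y(x))\asymp f(x)$, so both counting-function bounds are in place.

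The main obstacle is producing, for every $n\ge 1$, a factorization $n=ab$ with $a\in A$ and $b\in B$. The natural candidate is to take $a$ equal to the $y(n)$-smooth part of $n$ and $b$ equal to the $y(n)$-rough part; then $b\in B$ is automatic, since $P^-(b)>y(n)\ge y(b)$ as $b\le n$ and $y$ is monotonic. The delicate side is $a\in A$, because we only know $P^+(a)\le y(n)$, whereas membership in $A$ requires $P^+(a)\le y(a)$, and this fails whenever $a\ll n$. The cleanest way to handle this is to replace the smooth-part choice by a maximality argument: let $a$ be the largest divisor of $n$ satisfying $P^+(a)\le y(a)$. Then $b=n/a$ automatically satisfies $P^-(b)>y(a)$, for otherwise $ap\in A$ would contradict maximality; and provided $a\ge b$ the monotonicity of $y$ forces $y(b)\le y(a)<P^-(b)$, giving $b\in B$. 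For the remaining case $a<b$ (equivalently $a<\sqrt{n}$), the argument must be iterated on $b$, or supplemented by enlarging the initial segment $M$ and exploiting the slow variation from condition (3). This is precisely where condition (4) enters crucially — the divergence $\sum f(n)/n^2=\infty$ is what allows the iteration to terminate successfully without the resulting $A$ exceeding the target density $O(f(x))$. Executing this closure argument, rather than the density bookkeeping for $A$ and $B$, is the main technical hurdle.
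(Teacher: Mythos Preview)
Your density bookkeeping for $A(x)$ and $B(x)$ is fine, but the closure step --- showing that your pair actually lies in $MC_2$ --- is not merely an unfinished ``main technical hurdle'': with the sets as you have defined them it fails outright, and neither iteration on $b$ nor enlarging the initial segment $M$ can repair it.

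Here is the obstruction. Take $n=pq$ with primes $p<q$ chosen so that $y(p)<p<y(n)$. Since $y(x)\to\infty$ while $y(x)=x^{o(1)}$ (the latter forced by $f(x)=o(x/\log x)$), such $p$ exist with $p$ arbitrarily large, in particular $p>M$. Now run through the four divisors of $n$. The choice $a=p$ fails because $P^{+}(p)=p>y(p)$ and $p>M$; the choices $a=q$ and $a=pq$ fail because $q\approx n/p>n/y(n)\gg y(n)\ge y(q)$; and the choice $a=1$, $b=pq$ fails on the $B$ side because $P^{-}(pq)=p<y(pq)$. So $n$ has no admissible factorization. Your maximality argument only delivers $P^{-}(b)>y(a)$, which when $a<b$ is strictly weaker than the required $P^{-}(b)>y(b)$; ``iterating on $b$'' produces three or more factors rather than two, and condition~(4) controls densities, not the combinatorics of a single $n$, so invoking it here does not help.

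The paper avoids this difficulty by abandoning the moving threshold altogether. It constructs, recursively from conditions (2)--(4), a \emph{fixed} set of primes $Q$ with $\sum_{q\in Q}1/q=\infty$, takes $A$ to be the squarefree integers supported on $Q$, and pairs it with $B=\{m\ell^{2}:m\text{ squarefree},\ p\mid m\Rightarrow p\notin Q\}$. This pair is in $MC_2$ \emph{for free}, via the squarefree kernel decomposition of every integer; a separate lemma shows $B(x)=o(x)$ whenever $\sum_{q\in Q}1/q$ diverges. All the work then goes into arranging $Q$ sparse enough that $A(x)=O(f(x))$ yet with divergent reciprocal sum --- this is where conditions (3) and (4) enter --- and the multiplicative-complement property never has to be argued.
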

As a corollary, we get
\begin{kov}
  There exists $(A,B)\in MC_2$ such that $A(x)=O(\frac{x}{(\log x)(\log \log x)})$ and $B(x)=o(x)$.
\end{kov}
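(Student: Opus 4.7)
The plan is to apply Theorem \ref{limsupfxyes} to the function $f(x) = x/(\log x \log\log x)$, modified on a bounded initial interval so that it becomes positive and monotone increasing on $[1,\infty)$ without altering its asymptotics. The corollary then follows immediately, since the pair $(A,B)\in MC_2$ produced by the theorem satisfies $A(x) = O(f(x)) = O\bigl(x/(\log x \log\log x)\bigr)$ and $B(x) = o(x)$.

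To invoke the theorem I must verify its four hypotheses for this $f$. Hypotheses (1) and (2) are routine: monotonicity for large $x$ is a one-line derivative check, while the bounded range is absorbed by the initial modification, and $f(x) = o(x/\log x)$ is nothing more than $1/\log\log x \to 0$. Hypothesis (4), the divergence of $\sum_{n} 1/(n \log n \log\log n)$, is the classical Abel--Bertrand series, handled by the substitution $u = \log\log x$ in the integral test.

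The only condition that requires actual work is (3). For $1 \le x_1 \le x_2 \le x_1^2$ the ratio simplifies to
\[
\frac{f(x_2)/f(x_1)}{x_2/x_1} \;=\; \frac{\log x_1}{\log x_2}\cdot\frac{\log\log x_1}{\log\log x_2}.
\]
The constraint $x_2 \le x_1^2$ gives $\log x_2 \le 2\log x_1$, so the first factor lies in $[1/2,1]$. Applying $\log$ once more, $\log\log x_2 \le \log\log x_1 + \log 2$, so the second factor tends to $1$ and in particular lies in $[1/2,1]$ once $x_1$ exceeds an absolute threshold. On the remaining compact range of $x_1$ the ratio is bounded above and below by continuity and positivity, yielding uniform constants $c_1, c_2$.

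The only mildly delicate point is thus the bookkeeping surrounding $\log\log$ near the lower endpoint, where our modification of $f$ needs to remain compatible with condition (3); this is a finite case-check. The substantive combinatorial construction of $(A,B)$ is entirely supplied by Theorem \ref{limsupfxyes}, so there is no hard step of a genuinely new kind.
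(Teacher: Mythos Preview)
Your proposal is correct and follows precisely the approach the paper intends: the corollary is stated in the paper without proof, as an immediate application of Theorem~\ref{limsupfxyes} to $f(x)=x/((\log x)(\log\log x))$, and your verification of the four hypotheses is exactly the routine check required.
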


In the last theorem, we consider $(A,B)\in MC_2$ such that $|B|< \infty$.
\begin{tet}\label{vegeseset}
\begin{enumerate}
\item There exists a $c_1>0$ such that for every $(A,B)\in MC_2$, $2\le |B|<\infty $, we have
$$
\frac{A(x)\log |B|}{x}\ge c_1
$$
for every $x\in \mathbb{Z}^+$.
\item There exists a constant $c_2$ such that for every $N\ge 3$, there exists $(A,B)\in MC_2$, $|B|=N$ such that
$$
\frac{A(x)\log \log |B|}{x}\le c_2
$$
for every $x\ge 10$.
\end{enumerate}
\end{tet}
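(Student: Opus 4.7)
For part 1, the strategy is to force $A$ to contain many integers whose smallest prime factor is large. Since $1 = 1 \cdot 1$, we have $1 \in B$, and for any prime $p\notin B$, the factorization $p = a \cdot b$ with $b \in B$ and $b\mid p$ forces $b=1$, hence $p\in A$. This already yields $A(x)\ge\pi(x)-N$, which is $\ge c x/\log N$ for $x$ in a polynomial range in $N$. For $x$ much larger, we apply a Mertens sieve at threshold $y\asymp N$: every $n\le x$ with $P^-(n)>y$ and not divisible by any $b\in B\cap(y,\infty)$ must lie in $A$. Mertens gives $|\{n\le x : P^-(n)>y\}|\gtrsim x/\log y\asymp x/\log N$, and the loss from large elements of $B$ is $\le\sum_{b\in B,\,b>y} x/b\le Nx/y$, negligible for $y\sim N$. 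Together with the trivial bound $A(x)\ge 1$ for $x\le\log N$, this gives $A(x)\ge c_1 x/\log N$ for every $x\in\mathbb{Z}^+$.

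For part 2, the construction uses the Boolean group structure on prime exponents modulo $2$. Choose $y$ to be the largest integer with $2^{\pi(y)}\le N$; by the prime number theorem $y=\Theta(\log N\cdot\log\log N)$. Let
\[
B_0 \;=\; \Bigl\{\,\prod_{p\in S} p \,:\, S \subseteq \{\text{primes}\le y\}\,\Bigr\},\qquad |B_0|=2^{\pi(y)},
\]
and let $B$ be $B_0$ augmented by a few additional integers (the small non-squarefree numbers $4,8,9,12,\ldots$, plus dummies) so that $|B|=N$. Define
\[
A\;=\;\bigl\{\,n\in\mathbb{Z}^+ : v_p(n)\text{ is even for every prime } p\le y\,\bigr\}.
\]
For any $n$, taking $b=\prod_{p\le y,\ v_p(n)\text{ odd}}p\in B_0$ gives a squarefree $y$-smooth divisor of $n$ such that $a=n/b$ has even $y$-smooth valuations; hence $a\in A$ and $(A,B)\in MC_2$.

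The density of $A$ is an Euler product: at each prime $p\le y$, the local density of integers with even $v_p$ is $p/(p+1)$, and independence across primes gives
\[
\lim_{x\to\infty}\frac{A(x)}{x}\;=\;\prod_{p\le y}\frac{p}{p+1}\;=\;\prod_{p\le y}\!\Bigl(1-\tfrac{1}{p+1}\Bigr).
\]
Taking logarithms and applying Mertens' estimate $\sum_{p\le y}1/(p+1)=\log\log y+O(1)$ yields the value $\Theta(1/\log y)=\Theta(1/\log\log N)$. Hence $A(x)=O(x/\log\log N)$ asymptotically, and $A(x)\log\log|B|\le c_2 x$ for $x$ large; the bound extends to all $x\ge 10$ by arranging the augmentation of $B$ to absorb the problematic small non-squarefree integers into $B$ itself, which controls the initial segment of $A$.

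The main obstacle in part 1 is the intermediate range, between the prime-counting regime and the Mertens-sieve regime, where one must sieve at a smaller threshold $y\le\sqrt{x}$ or bootstrap from the smaller-$x$ bound to obtain $A(x)\ge c_1 x/\log N$ uniformly. For part 2 the key insight is the $\mathbb{F}_2^{\pi(y)}$ structure of prime exponents modulo $2$, which matches exactly the $\log\log$ asymptotic of $\sum_{p\le y}1/p$ and yields the improved density $1/\log\log N$; this closes the $\log/\log\log$ gap between the two parts of the theorem.
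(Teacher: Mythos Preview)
For Part~2 your construction coincides with the paper's: the paper takes $B$ to contain all squarefree integers supported on the first $k\approx\log_2 N$ primes and sets $A=\{mr^2:p_i\nmid m\text{ for }i\le k\}$, which is precisely your set $\{n:v_p(n)\text{ even for every }p\le y\}$ once one notes that $n=mr^2$ with $p_i\nmid m$ is possible iff each $v_{p_i}(n)$ is even. The paper estimates the density via $A(x)\le\sum_{r\le\sqrt{x}}A_{\mathrm{sf}}(x/r^2)\le\frac{\pi^2}{6}\,x\prod_{i\le k}(1-\tfrac{1}{p_i})+O(\sqrt{x})$ rather than your Euler product $\prod_{p\le y} p/(p+1)$, but both give $\Theta(1/\log\log N)$ through Mertens.

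For Part~1 the paper takes a shorter route that avoids your sieve-and-subtract entirely. Let $V=\{p:\ p\text{ is the largest prime factor of some }b\in B,\ b>1\}$; then $|V|\le|B|$, and any $n$ having no prime factor in $V$ is divisible by no $b\in B\setminus\{1\}$, hence $n\in A$. This gives directly
\[
\liminf_{x\to\infty}\frac{A(x)}{x}\ \ge\ \prod_{p\in V}\Bigl(1-\frac{1}{p}\Bigr)\ \ge\ \prod_{i=1}^{|B|}\Bigl(1-\frac{1}{p_i}\Bigr)\ \asymp\ \frac{1}{\log|B|},
\]
with no intermediate range to patch and no large-$b$ subtraction.

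Your sieve step in Part~1 contains a quantitative slip: with $y\asymp N$ the subtracted term $\sum_{b\in B,\,b>y}x/b\le Nx/y$ is of order $x$, which dominates the main term $x/\log y\asymp x/\log N$ rather than being negligible. To rescue the argument you would need $y/N\to\infty$ (say $y=N\log^2 N$ or $y=N^2$), which only enlarges the ``intermediate range'' you already flag as an obstacle. The largest-prime-factor device sidesteps this completely.
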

We finally pose some open problems for further research. As a corollary of Theorem \ref{limsup:max}, we get the following statement:
\begin{kov}
There exists $(A,B)\in MC_2$ such that
$$
\lim_{x\to \infty}\frac{max\{ A(x),B(x)\}\sqrt{\log \min \{  A(x),B(x)\} } }{x}=\frac{1}{\sqrt{\pi }} .
$$
\end{kov}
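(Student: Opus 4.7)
The plan is to apply the constructive part of Theorem \ref{limsup:max} directly; the corollary is essentially a cosmetic reformulation of that asymptotic. First I would fix the pair $(A,B)\in MC_2$ produced by Theorem \ref{limsup:max}(2), for which
$$A(x)=\left(\frac{1}{\sqrt{\pi}}+o(1)\right)\frac{x}{\sqrt{\log x}}, \qquad B(x)=\left(\frac{1}{\sqrt{\pi}}+o(1)\right)\frac{x}{\sqrt{\log x}}.$$
Because both counting functions obey the same asymptotic, the symmetric operations $\max\{A(x),B(x)\}$ and $\min\{A(x),B(x)\}$ are likewise each $\left(\tfrac{1}{\sqrt{\pi}}+o(1)\right)\tfrac{x}{\sqrt{\log x}}$ as $x\to\infty$.

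The next step is to evaluate the $\sqrt{\log\min\{A(x),B(x)\}}$ factor. Taking the logarithm,
$$\log\min\{A(x),B(x)\}=\log x-\tfrac{1}{2}\log\log x-\tfrac{1}{2}\log\pi+o(1)=(1+o(1))\log x,$$
so
$$\sqrt{\log\min\{A(x),B(x)\}}=(1+o(1))\sqrt{\log x}.$$
The only point requiring the slightest care here is confirming that the lower-order term $\tfrac{1}{2}\log\log x$ is indeed absorbed; this is immediate since $\log\log x=o(\log x)$.

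I would then multiply the two asymptotics:
$$\max\{A(x),B(x)\}\cdot\sqrt{\log\min\{A(x),B(x)\}}=\left(\frac{1}{\sqrt{\pi}}+o(1)\right)\frac{x}{\sqrt{\log x}}\cdot(1+o(1))\sqrt{\log x},$$
which collapses to $\left(\tfrac{1}{\sqrt{\pi}}+o(1)\right)x$. Dividing by $x$ and letting $x\to\infty$ yields the limit $\tfrac{1}{\sqrt{\pi}}$, as required. There is no genuine obstacle: all the work has already been done in establishing Theorem \ref{limsup:max}(2), and the passage from that theorem to the present corollary amounts to the elementary observation that $\sqrt{\log(x/\sqrt{\log x})}\sim\sqrt{\log x}$.
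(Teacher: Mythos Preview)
Your argument is correct and is exactly the intended one: the paper presents this statement as an immediate corollary of Theorem~\ref{limsup:max}(2) without writing out a separate proof, and your derivation---using that $\max$ and $\min$ of $A(x),B(x)$ share the common asymptotic $(\pi^{-1/2}+o(1))x/\sqrt{\log x}$, and that $\sqrt{\log(x/\sqrt{\log x})}\sim\sqrt{\log x}$---is precisely the intended unpacking.
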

Inspired by this, Theorem \ref{limsup:maxlogmin} may be sharpened as follows:
\begin{prob}
Is it true that for any infinite $(A,B)\in MC_2$
$$
\limsup_{x\to \infty}\frac{max\{ A(x),B(x)\}\sqrt{\log \min \{  A(x),B(x)\} } }{x}>0?
$$
\end{prob}
It follows from Theorem \ref{vegeseset} that
$$
\frac{c_1}{\log k}\le \inf_{B:|B|=k}\{ \inf_{A:(A,B)\in MC_2}\{ \limsup_{x\to \infty} \frac{A(x)}{x} \} \} \le \frac{c_2}{\log \log k}.
$$
It would be interesting to determine the right magnitude of the above infimum.
\begin{prob}
  Find the magnitude of the function
$$
f(k)=\inf_{B:|B|=k}\{ \inf_{A:(A,B)\in MC_2}\{ \limsup _{x\to \infty }\frac{A(x)}{x}  \}  \}.
$$
\end{prob}
In Theorem \ref{limsupfxyes} we have some smoothness assumptions. Is it true that we can omit these conditions?
\begin{prob}
  Is it true that if $f(x)>0$, $x\ge 1$ is a monotonically increasing function such that $\sum_{n=1}^{\infty }\frac{f(n)}{n^2}=\infty $, then there exists an $(A,B)\in MC_2$ such that $A(x)=O(f(x))$ and $B(x)=o(x)$?
\end{prob}

\section{Proofs}

Throughout the paper the set of primes is denoted by $P$ and $p$ will always denote a prime number.

\begin{proof}[Proof of Theorem \ref{liminf:maxlogmin}]
The set $S\subseteq \mathbb{Z}^+$ is called multiplicative basis of order 2 if every positive integer $n$ can be written as $n=ss'$, where $s,s'\in S$. The set of multiplicative bases of order 2 is denoted by $MB_2$. Pach and S\' andor \cite{PS} proved that if $S\in MB_2$, then $\displaystyle \liminf_{x\to \infty} \frac{S(x)}{\frac{x}{\log x}}>1$. For $(A,B)\in MC_2$, we have $A\cup B\in MB_2$ and $(A\cup B)(x)\le A(x)+B(x)$. It follows that
\[\liminf\limits_{x\to \infty} \frac{A(x)+B(x)}{\frac{x}{\log x}}>1,\]
that is, there exists a $\delta >0$ such that
\begin{align}\label{osszeg}
\liminf\limits_{x\to \infty} \frac{A(x)+B(x)}{\frac{x}{\log x}}\ge 1+\delta .
\end{align}
Let $\beta >0$ be a constant such that  $$\frac{1}{2+\delta }<\frac{1}{2}-\beta<\frac{1}{2+\frac{\delta }{2} }.$$

In order to prove the theorem, we define a partition of $\mathbb{Z}^+$. Let
\[H_1=\{x\in \mathbb{Z}^+: \frac{\delta}{2} \frac{x}{\log x}\le \min\{A(x),B(x)\} \}\]
\[H_2=\{x\in \mathbb{Z}^+: \frac{x^{\frac{1}{2}-\beta}}{\log^2 x}\le \min\{A(x),B(x)\} <\frac{\delta}{2} \frac{x}{\log x}\} \]
\[H_3=\{x\in \mathbb{Z}^+:\min\{A(x),B(x)\}< \frac{x^{\frac{1}{2}-\beta}}{\log^2 x}\} .\]
Because of (\ref{osszeg}),
\[\liminf_{x\in H_1, x\to \infty} \frac{ \max\{A(x),B(x)\}}{\frac{x}{\log x}}\ge \frac{1}{2}+\frac{\delta}{2}\]
It is easy to see from the definition of $H_1$ that
\[\liminf_{x\in H_1, x\to \infty} \frac{\log  \min\{A(x),B(x)\}}{\log x}= 1.\]
Hence
\[\liminf_{x\in H_1, x\to \infty} \frac{ \max\{A(x),B(x)\}\log  \min\{A(x),B(x)\}}{x}\ge \frac{1}{2}+\frac{\delta}{2}.\]
In the second case, it follows from (\ref{osszeg}) that
\[\liminf_{x\in H_2, x\to \infty} \frac{ \max\{A(x),B(x)\}}{\frac{x}{\log x}}\ge 1+\frac{\delta}{2}\]
and it is easy to see from the definition of $H_2$ that
\[\liminf_{x\in H_2, x\to \infty} \frac{\log  \min\{A(x),B(x)\}}{\log x}\ge \frac{1}{2}-\beta .\]
Hence
\[\liminf_{x\in H_2, x\to \infty} \frac{ \max\{A(x),B(x)\}\log  \min\{A(x),B(x)\}}{x}\ge \left(1+\frac{\delta}{2}\right)\left(\frac{1}{2}-\beta \right)>\frac{1}{2}.\]
It remains to consider the case $x\in H_3$ as $x\to \infty$. In this case
 $$ \min\{A(x),B(x)\}\log^2 \min\{A(x),B(x)\}<x^{\frac{1}{2}-\beta}.$$
Let us suppose that $\min\{A(x),B(x)\}=B(x)$ for a given $x\in H_3$. Then
\begin{equation}\label{lower}
\frac{\log x}{\log (B(x)\log^2 B(x))}\ge 2+\frac{\delta }{2}.
\end{equation}
Clearly,
	\begin{equation}\label{upperx}
\begin{split}
		x &\le\phantom{=} |\{n\le x: \exists (a,b)\in A\times B\colon n = ab,b\ge  B(x)\log^2 B(x)\}|+ \\
		&\phantom{=}+ |\{n\le x: \exists (a,b)\in A\times B\colon n = ab,1<b<  B(x)\log^2 B(x)\}|+ \\
		&\phantom{=}+ |\{n\le x: \exists (a,b)\in A\times B\colon n = ab,b=1\}|.
\end{split}
	\end{equation}
	The first term in (\ref{upperx}) can be estimated by
	\begin{align*}
		|\{n\le x: \exists (a,b)\in A\times B\colon n = ab,b\ge B(x)\log^2 B(x)\}|&\le \sum_{B(x)\log^2B(x)\le b\le x, b\in B}\frac{x}{b}\\&\le \frac{x}{B(x)\log^2B(x)}B(x)
		\le  \frac{x}{\log^2B(x)}.
	\end{align*}
	The second term in (\ref{upperx}) can be estimated by
	\begin{align*}
		|\{n\le x:&\exists (a,b)\in A\times B\colon n = ab,1<b< B(x)\log^2 B(x)\}|\le
		\\
		&\le |\{n\le x: \exists p\in P\colon p\mid n ,1<p< B(x)\log^2 B(x)\}|.
	\end{align*}
The third term in (\ref{upperx}) is
\[|\{n\le x\mid\exists (a,b)\in A\times B\colon n = ab,b=1\}|=A(x),\]
and so
\begin{align}\label{upperx3}
    x\le \frac{x}{\log^2 B(x)}+|\{n\le x: \exists p\in P\colon p\mid n ,1<p< B(x)\log^2 B(x)\}|+A(x)
\end{align}
On the other hand,
\begin{equation}\label{lowerAx}
\begin{split}
    x=&1+|\{n\le x: \exists p\in P\colon p\mid n ,1<p< B(x)\log^2 B(x)\}|\\ &+|\{n\le x\mid \forall p\in P\colon p\mid n\implies p\ge B(x)\log^2B(x)\}|.
\end{split}
\end{equation}
It follows from (\ref{upperx3}) and (\ref{lowerAx}) that
\begin{equation}\label{lowerA}
		|\{n\le x: \forall p\in P\colon p\mid n\implies p\ge B(x)\log^2B(x)\}|+1-\frac{x}{\log^2B(x)}\le A(x).
\end{equation}
To estimate the first term we need the  Buchstab function. The Buchstab function is the unique continuous function $\omega : [1,\infty [ \to \mathbb{R}^+$ defined by the delay differential equation
$$
\omega (u)=\frac{1}{u} \quad \hbox{for $1\le u\le 2$}
$$   	
$$
\frac{d}{du}(u\omega (u))=\omega (u-1)\quad \hbox{for $u\ge 2$.}
$$
In the second equation, the derivative at $u=2$ should be taken as $u$ approaches $2$ from the right. Denote by $\varphi (x,y)$  the number of positive integers less than or equal to $x$ with no prime factor less than $y$. 	
In 1970, Warlimont \cite{W} proved the following remarkable result.
\begin{tet}[Warlimont, 1970]\label{War}
Let $u_0>1$. There exists a $C_0=C_0(u_0)$ such that if $y\ge 2$, $\frac{\log x}{\log y}\ge u_0$ then
$$
|\varphi (x,y)-e^{\gamma}\omega (u)x\prod_{p<y}(1-\frac{1}{p})|\le \frac{C_0x\prod_{p<y}\left( 1-\frac{1}{p}\right) }{\log x}.
$$
\end{tet}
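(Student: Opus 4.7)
The plan is to prove Theorem \ref{War} by strong induction on $u = \log x/\log y$, powered by the Buchstab identity
\begin{equation*}
\varphi(x, y) = \varphi(x, z) + \sum_{y \le p < z} \varphi(x/p, p) \qquad (y \le z),
\end{equation*}
obtained by classifying each integer counted by $\varphi(x,y)$ according to its smallest prime factor: either all prime factors are $\ge z$, or the smallest prime factor $p$ lies in $[y, z)$ in which case $n/p$ is counted by $\varphi(x/p, p)$. After normalizing to
$$W(x,y) \;:=\; \frac{\varphi(x,y)}{x\prod_{p<y}(1 - 1/p)},$$
the goal is to show $W(x,y) = e^{\gamma}\omega(u) + O(1/\log x)$ uniformly for $u \ge u_0$, with implied constant depending only on $u_0$. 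The analytic inputs are the Prime Number Theorem with the classical de la Vallée Poussin error term, Mertens' theorem $\prod_{p<y}(1-1/p) = e^{-\gamma}/\log y \cdot (1 + O(1/\log y))$, and the defining delay differential equation $(u\omega(u))' = \omega(u-1)$ of the Buchstab function.

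For the base case $u_0 \le u \le 2$, one has $\omega(u) = 1/u$, and the only integers $n \le x$ with all prime factors $\ge y$ are $1$ together with the primes in $[y, x]$, so $\varphi(x,y) = 1 + \pi(x) - \pi(y)$. Combining PNT, which yields $\pi(x) - \pi(y) = x/\log x + O(x/\log^2 x)$ uniformly on this range, with Mertens immediately gives $W(x,y) = e^\gamma/u + O(1/\log x)$, matching $e^\gamma\omega(u)$ on $[u_0, 2]$.

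For the inductive step, assume the estimate holds for all pairs with ratio in $[u_0, u_1]$ and let $u \in (u_1, u_1 + 1]$ with $u_1 \ge 2$. Apply the Buchstab identity with $z = \sqrt{x}$: the term $\varphi(x, \sqrt{x})$ is handled by the base case, and for each prime $p \in [y, \sqrt{x})$ the ratio $\log(x/p)/\log p = \log x/\log p - 1 \in (1, u-1] \subseteq [u_0, u_1]$ lies in the inductive range, so $\varphi(x/p, p)$ admits the inductive estimate. Substituting, dividing through by $x\prod_{p<y}(1-1/p)$, and converting the resulting prime sum to an integral via Abel summation against PNT, the main term is identified with $e^\gamma\omega(u)$ by matching the integrated form of the Buchstab equation
$$u\omega(u) \;=\; \omega(2) + \int_{2}^{u}\omega(t-1)\, dt$$
under the change of variable $t = \log x/\log p$, using Mertens to treat $\prod_{q<p}(1-1/q)/\prod_{q<y}(1-1/q)$ as a smooth weight asymptotic to $\log y/\log p$ on the range of summation.

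The main obstacle is keeping the error term uniform in $u$ as $u \to \infty$: each level of induction generates fresh error contributions from the Buchstab sum, and a naive triangle-inequality bound would grow with $u$. Two observations save the argument. First, $\omega(u) \to e^{-\gamma}$ as $u \to \infty$, so the main-term constants remain bounded no matter how many induction steps are required. Second, every individual error contribution in the sum carries a factor $1/\log(x/p) \le 2/\log x$ because $p < \sqrt x$, and the total prime weight $\sum_{y \le p < \sqrt x}\prod_{q<p}(1-1/q)/p$ is, by Mertens and partial summation, essentially $\prod_{p<y}(1-1/p)$ up to a factor logarithmic in $u$; a slightly smarter splitting of the Buchstab sum (e.g.\ at $z = x^{1/\lceil u \rceil}$ rather than $\sqrt{x}$ once $u$ is large, giving only $O(1)$ telescoping pieces to traverse) absorbs this logarithmic factor and yields a single constant $C_0 = C_0(u_0)$ valid uniformly for all $u \ge u_0$. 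A secondary technicality is that PNT must be invoked with a quantitative error throughout so that the Riemann-sum-to-integral conversion in each inductive step contributes only $O(1/\log x)$.
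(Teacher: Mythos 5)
This statement is not proved in the paper at all: it is Warlimont's 1970 theorem, quoted verbatim from reference [W] as an external input to the proof of Theorem \ref{liminf:maxlogmin}, so there is no in-paper argument to compare yours against. Judged on its own, your sketch follows the classical de Bruijn--Buchstab iteration, which is indeed the standard route to this estimate, but it has two genuine gaps, and they sit exactly where the real work of the theorem lives.

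First, the induction does not close. You assert that for $p\in[y,\sqrt{x})$ the ratio $\log(x/p)/\log p$ lies in $(1,u-1]\subseteq[u_0,u_1]$, but $(1,u-1]$ is not contained in $[u_0,u_1]$: for $p$ just below $\sqrt{x}$ the ratio is just above $1$, below $u_0$ and below your base-case floor. To treat these terms you need an estimate uniform down to $u=1$, and there the discrepancy between $\varphi(x/p,p)=1+\pi(x/p)-\pi(p)$ and the proposed main term $e^{\gamma}\omega(v)\,(x/p)\prod_{q<p}(1-1/q)\approx (x/p)/(v\log p)$ is of size about $p/\log p$, which for $p$ near $\sqrt{x}$ is \emph{comparable to the main term}, not $O(\text{main}/\log x)$. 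The standard repair is to prove the stronger statement $\varphi(x,y)=\frac{x\,\omega(u)-y}{\log y}+O(\cdot)$ uniformly for all $u\ge 1$ and to carry the secondary term $-y/\log y$ through the induction (the accumulated corrections $\sum_{p\approx\sqrt{x}}p/\log p$ then total $O(x/\log^2 x)$, which is admissible). Your normalization $W(x,y)$ cannot see this term, so the inductive step as written fails for the tail of the Buchstab sum.

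Second, the uniformity of $C_0$ in $u$ --- precisely Warlimont's improvement over de Bruijn --- is asserted rather than proven. The naive propagation of errors gives $E(u+1)\le 2E(u)+O(1)$, since each sub-error carries only the factor $1/\log(x/p)\le 2/\log x$ while the prime weights $\sum_{y\le p<\sqrt{x}}\prod_{q<p}(1-1/q)/p$ reproduce a full $\prod_{p<y}(1-1/p)$; this yields $2^{u}$ growth, not a constant. Your proposed fix of splitting at $z=x^{1/\lceil u\rceil}$ does not reduce the number of inductive generations: the Buchstab identity still forces a descent of one unit of $u$ per step through the terms $\varphi(x/p,p)$. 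What actually rescues uniformity is a sharper evaluation of $\sum_{y\le p<\sqrt{x}}\frac{1}{p\log p\,\log(x/p)}$ against the target $\frac{1}{\log y\,\log x}$, combined with the exponential decay of $\omega(v)-e^{-\gamma}$; this computation must be carried out, not gestured at. Since the result is a classical cited theorem, the appropriate course here is to keep the citation to [W] rather than to reprove it.
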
 	
It is well known that the minimum of $\omega (u)$ is taken at $u=2$, $\omega (2)=0.5$ and for every $\mu >0$, there exists an $\varepsilon >0$ such that for $u\ge 2+\mu $, we have $\omega (u)\ge 0.5+\varepsilon $. Let us suppose that $\omega (u)\ge 0.5+\varepsilon _0$ if $u\ge 2+\frac{\delta }{2}$. According to the Warlimont's theorem and (\ref{lower}) we get that
$$
|\{n\le x\mid \forall p\in P\colon p\mid n\implies p\ge B(x)\log^2B(x)\}|=\varphi(x,B(x)\log^2B(x))\ge
$$
$$
e^{\gamma }\omega \left( \frac{\log x}{\log (B(x)\log ^2B(x))} \right) x \prod_{p<B(x)\log^2B(x)}(1-\frac{1}{p}) -\frac{C_0x\prod_{p<B(x)\log ^2B(x)}\left( 1-\frac{1}{p}\right) }{\log x}\ge
$$
$$
(0.5+\varepsilon _0+o(1))e^{\gamma }x\prod_{p<B(x)\log^2B(x)}(1-\frac{1}{p}),
$$
where $\gamma$ is the Euler-Mascheroni constant.
	
	
	
	It is well-known that \[\prod_{p<y,\: p\in P} (1-\frac{1}{p})=(1+o(1))\frac{e^{-\gamma}}{\log y}\]
	as $y\to \infty$, so
		\[|\{n\le x\mid \forall p\in P\colon p\mid n\implies p\ge B(x)\log^2B(x)\}|\ge (0.5+\varepsilon _0+o(1))\frac{x}{\log\left( B(x)\log^2B(x)\right) }=$$ $$(0.5+\varepsilon _0+o(1))\frac{x}{\log B(x)} \]
	as $x\to \infty $. By (\ref{lowerA}),
$$
A(x)\ge 1-\frac{x}{\log^2B(x)}+(0.5+\varepsilon _0+o(1))\frac{x}{\log B(x)}=(0.5+\varepsilon _0+o(1))\frac{x}{\log B(x)}.
$$	
Hence we have
$$
(0.5+\varepsilon _0+o(1))x\le A(x)\log B(x).
$$
It follows that
\[\liminf_{x\in H_3, x\to \infty} \frac{ \max\{A(x),B(x)\}\log  \min\{A(x),B(x)\}}{x}\ge 0.5+\varepsilon _0.\]

	
	
	

	
	
	
	
	
	
	
	
	This means that
	
	\[\liminf\limits_{x\to \infty} \frac{\max\{A(x),B(x)\}\log \min\{A(x),B(x)\}}{x}\ge \min\{\frac{1}{2}+\frac{\delta }{2} , (1+\frac{\delta}{2})(\frac{1}{2}-\beta), \frac{1}{2}+\varepsilon _0\}>\frac{1}{2},\]
which completes the proof.	
	
	
	
	


\end{proof}

\begin{proof}[Proof of Theorem \ref{limsup:maxlogmin}]

If $A(x)\neq o(x)$ or $B(x)\neq o(x)$, then $\max\{ A(x),B(x)\} \ne o(x)$ and $\min \{ A(x),B(x)\} \to \infty$, the conclusion directly follows. So we may suppose that $A(x)=o(x)$ and $B(x)=o(x)$. It follows from Theorem~\ref{limsup:max} that
\begin{equation}\label{supmax}
\limsup_{x\to \infty}\frac{\max \{A(x),B(x)\}}{\frac{x}{\sqrt{\log x}}}\geq \frac{1}{\sqrt{\pi}}.
\end{equation}
If $\displaystyle \liminf_{x\to \infty}\frac{\log \min \{A(x),B(x)\} }{\sqrt{\log x}}=\infty$, then we are done. So we can suppose that there exists a constant $D$ such that for infinitely many $x\in \mathbb{Z}^+$,
$$\frac{\textrm{min}\{A(x),B(x)\}}{ e^{D\sqrt{\log x}}}<1.$$

Without loss of generality, we can assume that
$$\frac{B(x)}{ e^{D\sqrt{\log x}}}<1$$
for infinitely many $x\in \mathbb{Z}^+$.

If $A(x)=o(x)$, then there exist infinitely many $x\in \mathbb{Z}^+$ such that $B(x)>\sqrt{x}$, otherwise for $B=\{ b_1,b_2,\dots \}$, $b_1<b_2<\dots $ there exists an $n_0$ such that for $n\ge n_0$, we have $n=B(b_n)\le \sqrt{b_n}$, that is $b_n\ge n^2$. It follows that $$\sum_{b\in B}\frac{1}{b}=\sum_{n=1}^{n_0-1}\frac{1}{b_n}+\sum_{n=n_0}^{\infty}\frac{1}{b_n}\le \sum_{n=1}^{n_0-1}\frac{1}{b_n}+\sum_{n=n_0}^{\infty}\frac{1}{n^2}< \infty ,$$
therefore $\sum_{b\in B}\frac{1}{b}$ convergent. Then there exists an $N_1$ such that $\sum_{b\ge N_1,b\in B}\frac{1}{b}<\frac{1}{2}$. Clearly
$$
x\le |\{ n:n\le x, \exists b<N_1, b\in B, a\in A, n=ab\} |+|\{ n:n\le x, \exists b\ge N_1, b\in B, a\in A, n=ab  |\le
$$
$$
\sum_{b<N_1,b\in B}A(\frac{x}{b})+\sum_{b\ge N_1,b\in B}A(\frac{x}{b})\le o(x)+\sum_{b\ge N_1,b\in B}\frac{x}{b}\le (0.5+o(1))x,
$$
a contradiction.




So we may assume that there are infinitely many positive integers $y_1<y_2<y_3< \ldots$ such that
$0.5 e^{D\sqrt{\log y_n}}<B(y_n)< e^{D\sqrt{\log y_n}}$ and $x_1<x_2<x_3<\dots $ such that $0.5\sqrt{x_n}<B(x_n)<\sqrt{x_n}$ with $e^{\sqrt{\log x_n}}>y_n$ for every $n$.

We are going to estimate $A(x_n)$. We need the following sets:

$C_n=\{p:p>y_n \textrm{ and }\exists m\in B\cap \{1,\dots ,x_n\} \textrm{ such that }  p \textrm{ is the largest prime factor of } m\},$

$D_n=\{p:p<y_n \textrm{ and } p\in B \},$

$E_n=\{p:p<y_n \}.$

It is easy to check that if $k$ is a positive integer such that $k<x_n$ and $k=pq$, where $p$ and $q$ are primes, $p\in E_n\setminus D_n$, $q\notin C_n$ and $q>y_n$, then $k$ must belong to the set $A$. Now we want to estimate the number of such $k$'s. Clearly,
$$A(x_n)\geq \sum\limits_{p\in E_n\setminus D_n} \left(\pi\left(\frac{x_n}{p}\right)-|C_n|-\pi(y_n)\right) $$
If $x_n$ is large enough, then for any $p\in E_n\setminus D_n$,  \[\pi\left(\frac{x_n}{p}\right)=(1+o(1))\frac{x_n/p}{\log (x_n/p)}=(1+o(1))\frac{1}{p}\frac{x_n}{\log x_n -\log p}>(1+o(1))\frac{1}{p}\frac{x_n}{\log x_n}\ge
\frac{1}{p}\frac{x_n}{2\log x_n}.\]
Clearly,
\[|C_n|\le B(x_n)<\sqrt{x_n},\]
and
\[\pi(y_n)\le y_n<e^{\sqrt{\log x_n}}.\]
It follows that
\[A(x_n) > \sum\limits_{p\in E_n\setminus D_n} \left( \frac{1}{p}\frac{x_n}{2\log x_n}-\sqrt{x_n}-e^{\sqrt{\log x_n}}\right) > \frac{x_n}{3\log x_n}\sum\limits_{p\in E_n\setminus D_n} \frac{1}{p}\]
if $n$ is large enough. It is well known that $\sum\limits_{p<N}\frac{1}{p}=\log\log N+ O(1)$ and $\sum\limits_{i=1}^{N}\frac{1}{p_i}=\log\log N+O(1)$ as well. Since $|D_n|\le B(y_n)<e^{D\sqrt{\log y_n}}$, there are at most $e^{\sqrt{\log y_n}}$ primes in the set $D_n$, so $\sum_{p\in D_n}\frac{1}{p}\le \sum\limits_{i=1}^{e^{\sqrt{\log y_n}}} \frac{1}{p_i}$. Therefore
$$\sum\limits_{p\in E_n\setminus D_n} \frac{1}{p}\geq \sum\limits_{p\in E_n} \frac{1}{p} -\sum\limits_{i=1}^{e^{\sqrt{\log y_n}}} \frac{1}{p_i}=\log \log y_n-\log \log e^{\sqrt{\log y_n}}+ O(1)= \frac{\log \log y_n}{2}+O(1).$$
It follows that $$A(x_n)\geq \frac{x_n}{6\log x_n}(\log\log y_n+O(1)).$$ We have supposed that $B(x_n)\ge 0.5\sqrt{x_n}$. Hence
$$\frac{A(x_n)\log B(x_n)}{x_n}\geq \frac{\log\log y_n}{12}+O(1).$$
This completes the proof, because $y_n\to \infty$ as $n\to \infty $.
\end{proof}

It order to prove Theorems \ref{liminffx} and \ref{limsupfxyes} we need the following lemma.

\begin{lem}\label{Q}
  Let $Q$ be a subset of prime numbers such that $\sum_{q\in Q}\frac{1}{q}=\infty $. Let
  $$
  A=\{ n: \hbox{ $n$ is squarefree }, n\in \mathbb{Z}^+, p|n \Rightarrow p\in Q\}.
  $$
  Then there exists a set $B\subseteq \mathbb{Z}^+$ such that $(A,B)\in MC_2$ and $B(x)=o(x)$.
\end{lem}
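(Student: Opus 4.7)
The plan is to construct $B$ by choosing, for each $n \in \mathbb{Z}^+$, a divisor $a(n) \in A$ and putting $b(n) = n/a(n)$ into $B$. The key idea is to let $a(n)$ strip off precisely those primes of $Q$ occurring with \emph{exact} multiplicity one in $n$: set
$$a(n) = \prod_{\substack{q \in Q \\ v_q(n) = 1}} q,$$
so that $a(n)$ is squarefree with all prime factors in $Q$, hence $a(n) \in A$ (with $a(n) = 1$ when the product is empty, and $1 \in A$ vacuously). Clearly $a(n) \mid n$, so $b(n) = n/a(n)$ is a positive integer, and $B := \{b(n) : n \in \mathbb{Z}^+\}$ satisfies $(A,B) \in MC_2$ by construction.

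Next I would prove $B(x) = o(x)$. A direct inspection of valuations shows that for every $q \in Q$ we have $v_q(b(n)) \in \{0\} \cup \{2, 3, \dots\}$: primes $q \in Q$ with $v_q(n) \in \{0,1\}$ end up with $v_q(b(n)) = 0$, while those with $v_q(n) \geq 2$ are untouched by $a(n)$. Consequently
$$B \subseteq B^\ast := \{b \in \mathbb{Z}^+ : v_q(b) \neq 1 \text{ for every } q \in Q\},$$
and it suffices to prove $B^\ast(x) = o(x)$.

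To bound $B^\ast(x)$, for each finite subset $Q' \subseteq Q$ observe that the condition ``$v_q(b) \neq 1$ for all $q \in Q'$'' is determined by $b$ modulo $\prod_{q \in Q'} q^2$, so by the Chinese Remainder Theorem the corresponding set has asymptotic density exactly $\prod_{q \in Q'}(1 - 1/q + 1/q^2)$. Since $B^\ast$ is contained in this set,
$$\limsup_{x \to \infty} \frac{B^\ast(x)}{x} \leq \prod_{q \in Q'}\left(1 - \frac{1}{q} + \frac{1}{q^2}\right)$$
for every finite $Q' \subseteq Q$. The logarithm of this product is at most $-\sum_{q \in Q'}(1/q - 1/q^2)$, which tends to $-\infty$ as $Q'$ exhausts $Q$ (using the hypothesis $\sum_{q \in Q} 1/q = \infty$ together with $\sum 1/q^2 < \infty$). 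Hence the infimum over $Q'$ is $0$, so $B^\ast(x) = o(x)$.

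The one place where real thought is needed is the definition of $a(n)$: the obvious alternative of taking $a(n) = \prod_{q \in Q,\, q \mid n} q$ (the full $Q$-radical of $n$) forces $B = \mathbb{Z}^+$ and thus fails. Restricting to primes of exact multiplicity one is the essential idea; once it is made, what remains is a routine CRT density calculation.
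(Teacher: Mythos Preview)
Your proof is correct and takes a somewhat different route from the paper's. The paper chooses its $B$ to be the set of integers in which every prime of $Q$ occurs with \emph{even} valuation (constructed as $B=\bigcap_k B_k$ with $B_k=\{ml^2: m\text{ squarefree},\ \gcd(m,q_1\cdots q_k)=1\}$), whereas your $B=B^\ast$ consists of integers in which every prime of $Q$ occurs with valuation $\neq 1$; the paper's $B$ is a proper subset of yours. For the density estimate the paper writes $B_k(x)\le\sum_{b}\sqrt{x/b}$ over the squarefree integers $b\le x$ coprime to $q_1\cdots q_k$ and bounds this sum by an explicit and somewhat laborious calculation. Your CRT argument is shorter and more transparent: the upper density of $B^\ast$ is at most $\prod_{q\in Q'}(1-1/q+1/q^2)$ for any finite $Q'\subseteq Q$, and this product tends to $0$ as $Q'$ exhausts $Q$ since $\sum_{q\in Q}1/q=\infty$ while $\sum 1/q^2<\infty$. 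Both approaches prove the lemma; yours is the more elementary one.
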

\begin{proof}
  Let $Q=\{ q_1,q_2,\dots  \}$, $q_1<q_2<\dots $. Define $Q_k=\{ q_1,\dots ,q_k \} $ for every $k\in \mathbb{Z}^+$.  Let
  $$
  A_k=\{ n: \hbox{ $n$ is squarefree }, n\in \mathbb{Z}^+, p|n \Rightarrow p\in Q_k \}.
  $$
  Let
  $$
  B_k=\{n: n=ml^2, \hbox{$m$ is squarefree and }p|m \Rightarrow p\notin Q_k \} .
  $$
  Then $(A_k,B_k)\in MC_2$, $A_1\subseteq A_2\subseteq \dots $, $\displaystyle A=\cup _{k=1}^{\infty }A_k$ and $B_1\supseteq B_2\supseteq \dots $. Let $\displaystyle B=\cap _{k=1}^{\infty}B_k .$ Then $(A,B)\in MC_2$. We are going to prove that for any $\varepsilon >0$, there exists an $x_0=x_0(\varepsilon )$ such that
  \begin{equation}\label{upperB}
     B(x)\le \varepsilon x
  \end{equation}
  for $x\ge x_0$.

  Let $k$ be an integer such that $4e^{-\sum_{i=1}^k\frac{1}{2q_i}}<\varepsilon $. Let $1=b_1^{(k)}<b_2^{(k)}<\dots $ be the squarefree integers in the set $B_k$. Then
  $$
  B(x)\le B_k(x)\le \sum_{b_l^{(k)}\le x}\sqrt{\frac{x}{b_l^{(k)}}}.
  $$
It follows from $\gcd (b_l^{(k)},q_1q_2\dots q_k)=1$  $$|\{ b_1^{(k)},b_2^{(k)},\dots  \}  \cap \{ Nq_1\dots q_k+1,\dots ,(N+1)q_1\dots q_k\} |\le (q_1-1)\dots (q_k-1)$$ for any nonnegative integer $N$. If $M(q_1-1)\dots (q_k-1)\le l <(M+1)(q_1-1)\dots (q_k-1)$, then
  $$
  b_l^{(k)}\ge (M+1)q_1\dots q_k-q_1\dots q_k\ge l\prod_{i=1}^k\frac{q_i}{q_i-1}-q_1\dots q_k\ge le^{\sum_{i=1}^k\frac{1}{q_i}}-q_1\dots q_k.
  $$
  If $l>2q_1\dots q_k$, then
  $$
  \frac{1}{\sqrt{b_l^{(k)}}}\le \frac{1}{\sqrt{le^{\sum_{i=1}^k\frac{1}{q_i}}-q_1\dots q_k}}= \frac{1}{\sqrt{le^{\sum_{i=1}^k\frac{1}{q_i}} (1-\frac{q_1\dots q_k}{le^{\sum_{i=1}^k\frac{1}{q_i}}})}}\le
  $$
  $$
  \frac{1}{\sqrt{l}e^{\sum_{i=1}^k\frac{1}{2q_i}} \left( 1-\frac{q_1\dots q_k}{le^{\sum_{i=1}^k\frac{1}{q_i}}}\right) } \le \frac{1}{\sqrt{l}e^{\sum_{i=1}^k\frac{1}{2q_i}}}+\frac{1}{\sqrt{l}e^{\sum_{i=1}^k\frac{1}{2q_i}}}\frac{2q_1\dots q_k}{le^{\sum_{i=1}^k\frac{1}{q_i}}}.
  $$
  Hence
  $$
  B(x)\le \sum_{l\le 2q_1\dots q_k}\sqrt{\frac{x}{b_l^{(k)}}}+\sum_{2q_1\dots q_k<l\le x}\sqrt{\frac{x}{b_l^{(k)}}}\le O(\sqrt{x})+\sum_{2q_1\dots q_k<l\le x } \left ( \frac{\sqrt{x}}{\sqrt{l}e^{\sum_{i=1}^k\frac{1}{2q_i}}}+\frac{\sqrt{x}}{l^{3/2}e^{\sum_{i=1}^k\frac{1}{2q_i}}} \right)\le
  $$
  $$
  O(\sqrt{x})+\frac{\sqrt{x}}{e^{\sum_{i=1}^k\frac{1}{2q_i}}}\int_0^x\frac{1}{\sqrt{t}}dt+O(1)=O(\sqrt{x})+2xe^{-\sum_{i=1}^k\frac{1}{2q_i}}\le \frac{\varepsilon }{2}x+O(\sqrt{x}),
  $$
  therefore there exists an $x_0=x_0(\varepsilon )$ such that (\ref{upperB}) holds.
\end{proof}

\begin{proof}[Proof of Theorem \ref{liminffx}]
Let us choose sequences $M_1<M_2<\dots $ and $N_1<N_2<\dots $ such that $M_k<N_k$,  $\displaystyle \sum_{M_k< p\le N_k}\frac{1}{p}\ge 1$, $\displaystyle \prod_{p\le N_{k-1}}p<M_k$ and $k2^{N_{k-1}}<f(M_k)$  for every $k\in \mathbb{Z}^+$. Let
$$
Q=\{p: \hbox{ there exists a $k\in  \mathbb{Z}^+$ such that } M_k<p\le N_k\}.
$$
Let
$$
A=\{n: n\in \mathbb{Z}^+,\hbox{ $n$ is squarefree } p|n \Rightarrow p\in Q \} .
$$
By construction, we have $\displaystyle \sum_{q\in Q}\frac{1}{q}=\sum_{k=1}^{\infty}\sum_{M_k<p\le N_k}\frac{1}{p}\ge \sum_{k=1}^{\infty}1=\infty $. By Lemma \ref{Q}, we know that there exists a set $B\in \mathbb{Z}^+$ such that $(A,B)\in MC_2$ and $B(x)=o(x)$ as $x\to \infty$. Moreover,
$$
\frac{A(M_k)}{f(M_k)}\le \frac{2^{\pi (N_{k-1})}}{f(M_k)} < \frac{2^{N_{k-1}}}{k2^{N_{k-1}}}=\frac{1}{k}
$$
for every $k\in \mathbb{Z}^+$, therefore
$$
\liminf_{x\to \infty }\frac{A(x)}{f(x)}=0,
$$
which completes the proof.
\end{proof}

\begin{proof}[Proof of Theorem \ref{limsupfxno}]
Let us suppose that for the function $f(x)>0$, $x\ge 1$ the series $\displaystyle \sum_{n=1}^{\infty}\frac{f(n)}{n^2}$ is finite. Let us suppose that for some $(A,B)\in MC_2$, we have
$A(x)\le Cf(x)$ for every $x\in \mathbb{Z}^+$ and $B(x)=o(x)$. Then
$$
\sum_{a\le N, a\in A}\frac{1}{a}=\sum_{n=1}^N\frac{A(n)-A(n-1)}{n}=\sum_{n=1}^{N-1}\frac{A(n)}{n(n+1)}+\frac{A(N)}{N}\le 1+C\sum_{n=1}^{\infty}\frac{f(n)}{n^2},
$$
therefore the series $\sum_{a\in A}\frac{1}{a}$ converges. It follows that there exists an $n_1$ such that $\sum_{a\ge n_1,a\in A}\frac{1}{a}<\frac{1}{2}$. Clearly
$$
x\le |\{ n:n\le x, \exists a<n_1, a\in A, b\in B, n=ab\} |+|\{ n:n\le x, \exists a\ge n_1, a\in A, b\in B, n=ab  |\le
$$
$$
\sum_{a<n_1,a\in A}B(\frac{x}{a})+\sum_{a\ge n_1,a\in A}B(\frac{x}{a})\le o(x)+\sum_{a\ge n_1,a\in A}\frac{x}{a}\le (0.5+o(1))x,
$$
a contradiction.
\end{proof}

\begin{proof}[Proof of Theorem \ref{limsupfxyes}]
We are going to find a set $Q\subseteq P$ such that
\begin{equation}\label{Qinfty}
\sum_{q\in Q}\frac{1}{q}=\infty
\end{equation}
and the set
$$
A=\{ n: n\hbox{ is squarefree }p|n \Rightarrow p\in Q  \}
$$
satisfies
\begin{equation}\label{AFx}
  A(x)=O(f(x)).
\end{equation}
Then by Lemma \ref{Q}, there exists a $B\subseteq \mathbb{Z}^+$ such that $(A,B)\in MC_2$ and $B(x)=o(x)$.

Let $\chi _Q(n)= \left\{ \begin{array}{ll} 1, & \mbox{if $n\in Q$}\\ 0, & \mbox{ if $n\notin Q$} \end{array} \right. $. Let
$$
A_k=\{ n: n\in A, n\hbox{ has $k$ different prime factors}\}.
$$
To prove (\ref{AFx}) it is enough to verify
\begin{equation}\label{Akfx}
A_k(x)\le \frac{e^{\frac{1}{c_1}}f(x)\left( \frac{2}{c_1}\sum_{n<x}\frac{\chi _Q(n)}{n} \right) ^{k-1} }{(k-1)!exp\left( \frac{2}{c_1}\sum_{n<x}\frac{\chi _Q(n)}{n} \right) }
\end{equation}
holds for $x\ge 1$ and $k\in \mathbb{Z}^+$, because
$$
A(x)\le 1+\sum_{k=1}^{\infty }A_k(x)\le 1+\sum_{k=1}^{\infty }\frac{e^{\frac{1}{c_1}}f(x)\left( \frac{2}{c_1}\sum_{n<x}\frac{\chi _Q(n)}{n} \right) ^{k-1} }{(k-1)!exp\left( \frac{2}{c_1}\sum_{n<x}\frac{\chi _Q(n)}{n} \right) }=1+e^{\frac{1}{c_1}}f(x).
$$
First, we define a set $R\subseteq P$, then by truncating it we get the new set $Q$. The set $R$ is defined recursively. We run over the primes $p$ and the prime number $p$ is included in the set $R$ if and only if
$$
R(p-1)+1\le \frac{f(p)}{exp\left( \frac{2}{c_1}\sum_{n<p}\frac{\chi _R(n)}{n} \right)}.
$$
First, we show that
$$
\sum_{r\in R}\frac{1}{r}=\infty .
$$
By contradiction, let us suppose that $\sum_{r\in R}\frac{1}{r}=K<\infty $. Let us suppose that $p\in R$ and the next prime in the set $R$ is $p'$. Let $p\le x<p'$. Then
$$
R(x)=R(p)=R(p-1)+1\le \frac{f(p)}{exp\left( \frac{2}{c_1}\sum_{n<p}\frac{\chi _R(n)}{n} \right)}\le \frac{f(x)}{exp\left( \frac{2}{c_1}\sum_{n<x}\frac{\chi _R(n)}{n} -\frac{2}{c_1}\frac{1}{p} \right)} =
$$
$$
\frac{e^{\frac{2}{c_1}\frac{1}{p}}f(x)}{exp\left( \frac{2}{c_1}\sum_{n<x}\frac{\chi _R(n)}{n} \right)}\le
\frac{e^{\frac{1}{c_1}}f(x)}{exp\left( \frac{2}{c_1}\sum_{n<x}\frac{\chi _R(n)}{n} \right)}\le e^{\frac{1}{c_1}}f(x).
$$
It follows that for any $x\ge 1$,
$$
R(x)\le e^{\frac{1}{c_1}}f(x)=o(\frac{x}{\log x}).
$$
It is well known that $\pi (n)-\pi (\frac{n}{2})=(0.5+o(1))\frac{n}{\log n}$ as $n\to \infty$, and so there exists an $n_1$ such that for any $n\ge n_1$ there exists a prime number $p$ in the interval $[0.5n,n]$ such that $p\notin R$. Then by the definition of set $R$ and condition 3,
$$
R(p-1)+1>\frac{f(p)}{exp\left( \frac{2}{c_1}\sum_{n<p}\frac{\chi _R(n)}{n} \right)}> \frac{f(p)}{e^{\frac{2K}{c_1}}}= \frac{\frac{f(p)}{f(n)}}{\frac{p}{n}}\frac{p}{n}\frac{1}{e^{\frac{2K}{c_1}}}f(n)\ge \frac{c_1f(n)}{2e^{\frac{2K}{c_1}}}.
$$
If $f(n)>\frac{6e^{\frac{2K}{c_1}}}{c_1}$, then
\begin{equation}\label{upperR}
R(n)\ge R(p-1)\ge \frac{c_1f(n)}{2e^{\frac{2K}{c_1}}}-1\ge \frac{c_1f(n)}{2e^{\frac{2K}{c_1}}}-\frac{c_1f(n)}{6e^{\frac{2K}{c_1}}}  \ge \frac{f(n)}{3c_2e^{\frac{2K}{c_1}}}.
\end{equation}
So inequality (\ref{upperR}) holds if $n\ge n_2$. By condition 4, we get that $\sum_{n=1}^{\infty }\frac{R(n)}{n^2}=\infty $, but
$$
\sum_{r\le N, r\in R}\frac{1}{r}=\sum_{n=1}^N\frac{R(n)-R(n-1)}{n}=\sum_{n=1}^{N-1}\frac{R(n)}{n(n+1)}+\frac{R(N)}{N}\ge \frac{1}{2}\sum_{n=1}^{N-1}\frac{R(n)}{n^2}\to \infty
$$
as $N\to \infty $, a contradiction.

Let $R=\{ r_1,r_2,\dots \}$, $r_1<r_2<\dots $. It is well known that $\displaystyle \lim _{x\to \infty }\sum_{\sqrt{x}\le p\le x}\frac{1}{p}=\log 2$. We can find an $N_0\in \mathbb{Z}^+$ such that
\begin{equation}\label{rare}
\sum_{\sqrt{x}\le r_i\le x, N_0|i}\frac{1}{r_i}<\frac{c_1\log 2}{2}
\end{equation}
for every $x\ge 1$. Let
$$
Q=\{ r_{N_0},r_{2N_0},\dots \}.
$$
Clearly
$$
\sum_{q<x, q\in Q}\frac{1}{q}\ge \frac{\sum_{r<x,r\in R}\frac{1}{r}-\sum_{i=1}^{N_0-1}\frac{1}{r_i}}{N_0}\to \infty
$$
as $N\to \infty$, therefore (\ref{Qinfty}) holds.

It remains to prove (\ref{Akfx}). We use by induction on $k$.

For $k=1$,
$$
A_1(x)=Q(x)\le R(x)\le \frac{e^{\frac{1}{c_1}}f(x)}{exp\left( \frac{2}{c_1}\sum_{n<x}\frac{\chi _R(n)}{n} \right)}\le \frac{e^{\frac{1}{c_1}}f(x)}{exp\left( \frac{2}{c_1}\sum_{n<x}\frac{\chi _Q(n)}{n} \right)}.
$$
Let us suppose that the statement holds for $k$ and we prove it for $k+1$. If $n\le x$, $n \in A_{k+1}$, that is $n=p_1\dots p_{k+1}$, $p_1<\dots <p_{k+1}$, $p_i\in Q$, then $p_i<\sqrt{x}$ for $1\le i\le k$, therefore $kA_{k+1}(x)\le \sum_{q\in Q, q<\sqrt{x} }A_k(\frac{x}{q})$. Hence
$$
kA_{k+1}(x)\le \sum_{q\in Q, q<\sqrt{x} }A_k(\frac{x}{q})\le \sum_{q\in Q, q<\sqrt{x} }\frac{e^{\frac{1}{c_1}}f(\frac{x}{q})\left( \frac{2}{c_1}\sum_{n<\frac{x}{q}}\frac{\chi _Q(n)}{n} \right) ^{k-1} }{(k-1)!exp\left( \frac{2}{c_1}\sum_{n<\frac{x}{q}}\frac{\chi _Q(n)}{n} \right) }.
$$
By condition 3, we have
$$
f(\frac{x}{q})\le \frac{1}{c_1}\frac{1}{q}f(x)
$$
and
by (\ref{rare}),
$$
exp\left( \frac{2}{c_1}\sum_{n<\frac{x}{q}}\frac{\chi _Q(n)}{n} \right)=\frac{exp\left( \frac{2}{c_1}\sum_{n<x}\frac{\chi _Q(n)}{n} \right)}{exp\left( \frac{2}{c_1}\sum_{\frac{x}{q}\le n<x}\frac{\chi _Q(n)}{n} \right)}\ge \frac{exp\left( \frac{2}{c_1}\sum_{n<x}\frac{\chi _Q(n)}{n} \right)}{exp\left( \frac{2}{c_1}\sum_{\sqrt{x}\le n<x}\frac{\chi _Q(n)}{n} \right)} \ge $$
$$
 \frac{exp\left( \frac{2}{c_1}\sum_{n<x}\frac{\chi _Q(n)}{n} \right)}{exp\left( \frac{2}{c_1}\frac{c_1\log 2}{2} \right)} =\frac{exp\left( \frac{2}{c_1}\sum_{n<x}\frac{\chi _Q(n)}{n} \right)}{2}.
$$
Hence
$$
kQ_{k+1}(x)\le \sum_{q\in Q, q<\sqrt{x} }\frac{e^{\frac{1}{c_1}}\frac{1}{c_1}\frac{1}{q}f(x)\left( \frac{2}{c_1}\sum_{n<\frac{x}{q}}\frac{\chi _Q(n)}{n} \right) ^{k-1} }{(k-1)!\frac{1}{2}exp\left( \frac{2}{c_1}\sum_{n<x}\frac{\chi _Q(n)}{n} \right) }\le
$$
$$
\frac{e^{\frac{1}{c_1}}f(x)\left( \frac{2}{c_1}\sum_{n<x}\frac{\chi _Q(n)}{n} \right) ^{k-1} }{(k-1)!exp\left( \frac{2}{c_1}\sum_{n<x}\frac{\chi _Q(n)}{n} \right) }\sum_{q\in Q, q<\sqrt{x} }\frac{2}{c_1}\frac{1}{q}<\frac{e^{\frac{1}{c_1}}f(x)\left( \frac{2}{c_1}\sum_{n<x}\frac{\chi _Q(n)}{n} \right) ^k }{(k-1)!exp\left( \frac{2}{c_1}\sum_{n<x}\frac{\chi _Q(n)}{n} \right) },
$$
which completes the proof.
\end{proof}

\begin{proof}[Proof of Theorem \ref{vegeseset}]

First we prove the first part of Theorem \ref{vegeseset}. Let \[V=\{p\: \text{prime }: \exists n\in B,  p\hbox{ is the largest prime factors of }n\}.\]

If no prime in $V$ divides a number $n$, then equality $n=ab, a\in A, B\in B$ implies $b=1, a=n$, thus $n\in A$. Hence
\[\liminf_{x\to \infty} \frac{A(x)}{x}\ge \prod_{p\in V}\left(1-\frac{1}{p}\right)\ge \prod_{i=1}^{|B|}\left(1-\frac{1}{p_i}\right), \]
where $p_i$ is the $i$th smallest prime number.

It is well known that
\begin{equation}\label{mertens}
\lim_{k\to \infty} \log k\prod_{i=1}^k \left(1-\frac{1}{p_i}\right)=e^{-\gamma},
\end{equation}
where $\gamma$ is the Euler-Mascheroni constant. It follows that there exists a constant $c_1>0$ such that
\[\liminf_{x\to \infty} \frac{A(x)\log |B|}{x}\ge c_1.\]

Now we prove the second part of Theorem \ref{vegeseset}. Let $2^k$ be the largest power of $2$ that is less than or equal to $N$. Let $B\subseteq \mathbb{Z}^+$ be a set that contains the set of all square-free numbers composed of the $k$ smallest primes. Clearly $2^{k-1}<|B|=2^k\le N$. Let
\[A=\{mr^2: p_i\nmid m \:\forall \: 1\le i\le k, r\in \mathbb{Z}^+ \}.\]
Denote by $A_{sf}$ the set of squarefree integers in the set $A$. If $n\in A_{sf}$, then $\gcd (n,p_1\dots p_k)=1$, therefore $A_{sf}(x)\le O(1)+x\prod_{i=1}^k\left( 1-\frac{1}{p_i} \right) $. Hence
$$
A(x)\le \sum_{r\le \sqrt{x}}A_{sf}\left( \frac{x}{r^2} \right)\le \sum_{r\le \sqrt{x}}\left( \frac{x}{r^2}\prod_{i=1}^k\left( 1-\frac{1}{p_i}\right) +O(1)  \right) = O(\sqrt{x})+\frac{\pi ^2 x}{6}\prod_{i=1}^k\left( 1-\frac{1}{p_i} \right) .
$$
Then $A$ and $B$ are multiplicative complements and by (\ref{mertens}),
\[\limsup_{x\to \infty} \frac{A(x)}{x}=\frac{\pi ^2}{6}\prod_{i=1}^k \left(1-\frac{1}{p_i}\right)\le \frac{c'}{\log k}\le \frac{c_2}{\log \log N},\]
which completes the proof.

\end{proof}

\renewcommand{\refname}{Bibliography}

\end{document}